\newtheorem{theorem}{Theorem}[section]
\newtheorem{corollary}[theorem]{Corollary}
\newtheorem{lemma}[theorem]{Lemma}
\newtheorem{question}[theorem]{Question}
\newtheorem{definition}[theorem]{Definition}
\newtheorem{remark}[theorem]{Remark}
\newtheorem*{thm16}{Theorem 1.6}
\newcommand{\la}{\langle}
\newcommand{\ra}{\rangle}
\newcommand{\V}{\Vert}
\newcommand{\Z}{\mathbb{Z}}
\newcommand{\N}{\mathbb{N}}
\newcommand{\mf}{\mathcal{F}}
\newcommand{\mh}{\mathcal{H}}
\newcommand{\pr}{\textrm{Prim}(G)}
\newcommand{\ve}{\varepsilon}
\numberwithin{equation}{subsection}
\title{Quasidiagonal Representations of Nilpotent Groups}
\author{Caleb Eckhardt}
\address{Department of Mathematics, Miami University, Oxford, OH, 45056}
\email{eckharc@miamioh.edu}
\thanks{Partially supported by NSF grant DMS-1262106.}
\date{}
\begin{document}
\maketitle
\begin{abstract} We show that every unitary representation of a discrete solvable virtually nilpotent group $G$ is quasidiagonal.  Roughly speaking, this says that every unitary representation of $G$ approximately decomposes as a direct sum of finite dimensional approximate representations. In operator algebraic terms we show that $C^*(G)$ is strongly quasidiagonal.
\end{abstract}
\section{Introduction}  Murray and von Neumann cite the study of unitary group representations as one of four key motivations for their development of operator algebra theory \cite{Murray36}.  The last seventy-five years witnessed numerous intimate interactions between the theories,  completely validating their motivation.  The goal of this paper is to obtain yet another connection between representation theory and operator algebras. On one hand we use a natural approximation property of C*-algebras to obtain information about unitary representations of discrete nilpotent groups. On the other hand we employ classic results about nilpotent groups to produce some new examples of strongly quasidiagonal C*-algebras, and simple, nuclear quasidiagonal C*-algebras.

A  linear operator on a Hilbert space is called \emph{quasidiagonal} if it is a compact perturbation of a direct sum of finite rank operators.  One analogously defines quasidiagonality of a set of operators, and hence of a representation of a C*-algebra (see Definition \ref{def:QD}). 
Interpreting quasidiagonality locally for a unitary group representation translates to declaring a unitary representation $\pi:G\rightarrow B(\mh)$  quasidiagonal if for every finite subset $\mf$ of $G$ and $\ve>0$, there are mutually orthogonal, finite rank projections $Q_n\in B(\mh)$ such that 
\begin{equation*}
\max_{t\in\mf}\Big\V \bigoplus_{n}Q_n\pi(t)Q_n-\pi(t)  \Big\V<\ve.
\end{equation*}
Note that this implies that the function $t\mapsto Q_n\pi(t)Q_n$ is almost multiplicative on $\mf$ and that $Q_n\pi(t)Q_n\in B(Q_n(\mh))$ is almost unitary.  In other words, $\pi$ is quasidiagonal if it locally approximately decomposes as a direct sum of finite dimensional approximate unitary representations.

Rosenberg proved \cite{Hadwin87} that the left regular representation of a non amenable discrete group is not quasidiagonal (see also \cite{Carrion13} for a quantitative version of this theorem). It is a long-standing open question whether or not the left regular representation of every amenable group is quasidiagonal (see \cite{Carrion13} for recent progress and the state of the art).  Following Hadwin, we call a group \emph{strongly quasidiagonal} if every unitary representation is quasidiagonal.

There are many examples in \cite{Carrion13} of amenable groups whose left regular representation is quasdiagonal, while the group is not strongly quasidiagonal.  The commonality between all of the examples is exponential growth; indeed it is precisely the growth conditions that lead to the non quasidiagonal representations.
On the other hand, it is fairly straightforward to see that every representation of an abelian group is quasidiagonal.  Since nilpotent groups (Definition \ref{def:nil}) posses a large degree of commutativity and have polynomial growth, it was natural to consider the problem of whether or not every nilpotent group is strongly quasidiagonal.  Moreover there are  representation theoretic simplifications present in nilpotent groups that suggest strong quasidiagonality. We recall the relevant facts.

Due to the fact that a discrete group is Type I if and only if it is virtually abelian \cite{Thoma64} (a group is virtually ``P" if it has a finite index ``P" subgroup), to study representations of nilpotent groups one usually looks for replacements for the dual. There are two natural candidates for this replacement: The primitive ideal space of $C^*(G)$, and the space of characters on $G.$  

An ideal of the group C*-algebra $C^*(G)$  is called \emph{primitive} if it is the kernel of some irreducible representation of $C^*(G).$  The primitive ideal space of $C^*(G)$, denoted $\pr$, is equipped with the hull-kernel topology.  In general, $\pr$ is topologically poorly behaved, but
Moore and Rosenberg proved \cite{Moore76} that if $G$ is nilpotent and finitely generated, then $\textrm{Prim}(G)$ is $T_1$ (i.e., all of the singleton sets are closed). 
Poguntke later generalized their result with:
\begin{theorem}[Moore \& Rosenberg, Poguntke \cite{Poguntke81}] \label{thm:pog} Let $G$ be a discrete, virtually nilpotent group. Then $G$ has a $T_1$ primitive ideal space. That is, every primitive ideal of $C^*(G)$ is a maximal ideal.
\end{theorem}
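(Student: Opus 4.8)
First note that the two assertions in the statement coincide: in a unital C*-algebra every maximal ideal is primitive (its quotient is a unital simple C*-algebra, hence has an irreducible representation), so a primitive ideal $P$ is a closed point of $\mathrm{Prim}(C^*(G))$ exactly when no maximal ideal properly contains it, i.e.\ exactly when $P$ is maximal. Since $G$ is discrete, $C^*(G)$ is unital, so it suffices to prove that $\mathrm{Prim}(C^*(G))$ is $T_1$.

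\emph{Reduction to the nilpotent case.} A virtually nilpotent group contains a finite‑index nilpotent subgroup, whose normal core is a nilpotent $N\trianglelefteq G$ with $[G:N]<\infty$. By Clifford theory and Green's description of the C*-algebra of a group extension, for any irreducible $\pi$ of $G$ the quotient $C^*(G)/\ker\pi$ is strongly Morita equivalent to a primitive quotient of $(C^*(N)/\ker\sigma)\rtimes(G_\sigma/N)$, a crossed product of a quotient of $C^*(N)$ by a finite group, where $\sigma$ is an irreducible of $N$ occurring in $\pi|_N$ and $G_\sigma\leq G$ is its finite‑index stabiliser. Granting the nilpotent case, $\ker\sigma$ is maximal, so $C^*(N)/\ker\sigma$ is simple, and a crossed product of a simple C*-algebra by a finite group has $T_1$ primitive ideal space; hence $\ker\pi$ is maximal, and we may assume $G$ nilpotent.

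\emph{Induction on nilpotency class.} I would prove the slightly stronger statement that $\mathrm{Prim}(C^*(N,\omega))$ is $T_1$ for every nilpotent $N$ and every $2$-cocycle $\omega$, by induction on the class of $N$. When $N$ is abelian this is Kleppner's theorem: with $R_\omega$ the radical of the alternating bicharacter $(a,b)\mapsto\omega(a,b)\overline{\omega(b,a)}$, the subalgebra $C^*(R_\omega)\cong C(\widehat{R_\omega})$ is central and $C^*(N,\omega)$ fibres over $\widehat{R_\omega}$ with \emph{simple} fibres, so $\mathrm{Prim}(C^*(N,\omega))\cong\widehat{R_\omega}$ is compact Hausdorff. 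For the inductive step, put $Z_0=\{z\in Z(N):\omega(z,g)=\omega(g,z)\text{ for all }g\in N\}$, so that $C^*(Z_0)\cong C(\widehat{Z_0})$ is central in $C^*(N,\omega)$; by Schur's lemma every irreducible representation has a $Z_0$-character, hence every primitive ideal contains one of the fibre ideals of this $C(\widehat{Z_0})$-algebra structure, and a routine upper‑semicontinuous‑bundle argument reduces $T_1$-ness of $\mathrm{Prim}(C^*(N,\omega))$ to $T_1$-ness of the spectra of the fibres $C^*(N/Z_0,\omega')$, with $N/Z_0$ again nilpotent.

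\emph{The main obstacle.} This induction closes at once when $Z_0=Z(N)$ — in particular on the first peeling, where $\omega$ is trivial and hence symmetric, so the class drops; this already settles all class‑$2$ groups, since one peeling lands in the abelian‑with‑cocycle case where Kleppner finishes the job (for the discrete Heisenberg group the fibres are precisely the rotation algebras $A_\theta$, simple for irrational $\theta$ and bundles of matrix algebras over $\mathbb{T}^2$ otherwise). The difficulty is that from class $3$ onward the cocycle $\omega'$ occurring in a fibre can be genuinely non‑symmetric on the next centre, so $Z_0\subsetneq Z(N)$ and $N/Z_0$ need not have smaller class, and the naive induction stalls. The substantial work — and where I expect most of the effort to go — is to treat these ``symplectic'' directions directly: choose a subgroup $H\leq N$ polarising $\omega$, realise the given irreducible representation as induced from $H$ so that the $T_1$ property transports along the induction (a Green‑type imprimitivity argument), with the non‑symmetric part of $\omega$ absorbed into generalised rotation algebras that are simple by Kleppner's criterion, and run the whole argument on a finer invariant than the nilpotency class alone (a suitably defined ``symplectic length''). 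A secondary technical point is the fact, used in the reduction above, that a crossed product of a simple C*-algebra by a finite group has $T_1$ primitive ideal space.
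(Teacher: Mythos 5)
This statement is not proved in the paper at all: it is imported from the literature (Moore--Rosenberg \cite{Moore76} and Poguntke \cite{Poguntke81}), so your attempt has to be judged as a proof of that cited theorem, and as such it has a genuine, self-acknowledged gap at its core. The framing steps are fine (maximal $=$ closed point in the unital case; the fibring of $C^*(N,\omega)$ over $\widehat{Z_0}$ with central $C(\widehat{Z_0})$, so that $T_1$-ness passes to the fibres; Kleppner's theorem for the abelian-with-cocycle base case). But the induction you set up does not close: as you yourself say, once the nilpotency class is at least $3$ the cocycle $\omega'$ on a fibre can fail to be symmetric on the new centre, $Z_0\subsetneq Z(N/Z_0)$-type phenomena appear, no obvious invariant decreases, and ``the naive induction stalls.'' Everything you offer at that point --- choosing a polarising subgroup, transporting $T_1$-ness along induced representations, absorbing the non-symmetric part into simple generalised rotation algebras, inventing a ``symplectic length'' --- is a description of where the work would have to go, not an argument; no polarisation is constructed, no imprimitivity theorem is verified in this twisted discrete setting, and the proposed invariant is neither defined nor shown to be well-founded. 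That missing step is precisely the substance of the theorem (Moore--Rosenberg handle the finitely generated case with quite different machinery, via the Mal'cev embedding and characters, and Poguntke's extension to all discrete virtually nilpotent groups is a further substantial argument), so the proposal cannot be accepted as a proof.

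Two secondary points are also asserted rather than proved and would need real arguments if you pursued this route: (i) the reduction from virtually nilpotent to nilpotent invokes a Mackey--Green analysis that in general produces a \emph{twisted} crossed product $(C^*(N)/\ker\sigma)\rtimes(G_\sigma/N)$, and the claim that a (twisted) crossed product of a simple C*-algebra by a finite group has $T_1$ primitive ideal space is exactly the kind of statement that needs proof, not a parenthetical; (ii) in the bundle argument one must check that the fibre over a character $\chi$ of $Z_0$ really is a twisted group algebra $C^*(N/Z_0,\omega')$ and that primitive ideals of the total algebra containing the same fibre ideal compare inside that fibre --- this part is essentially correct but should be written out, since it is the only mechanism by which $T_1$-ness is transported.
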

This in tandem with Voiculescu's theorem \cite{Voiculescu76} (see also \cite{Davidson96}), significantly simplifies the task of proving that $C^*(G)$ is strongly quasidiagonal.  We use Voiculescu's theorems  often enough in this paper that it is worthwhile recalling them.

In all that follows, let $A$ be a separable C*-algebra.  Recall that two representations $(\rho_1,\mh_1),(\rho_2,\mh_2)$ of $A$ are \emph{approximately unitarily equivalent} if there is a sequence of unitaries $u_n:\mh_1\rightarrow \mh_2$ such that 
\begin{equation*}
\lim_{n\rightarrow\infty}\V \rho_1(a)-u_n^*\rho_2(a)u_n  \V=0 \quad \textrm{ for all }a\in A.
\end{equation*}

\begin{theorem}[Voiculescu's Theorem \cite{Voiculescu76,Davidson96}] \label{thm:voi1}Suppose that $\rho_1$ and $\rho_2$ are representations of $A$ such that $\textrm{ker}(\rho_1)=\textrm{ker}(\rho_2).$ Suppose that neither $\rho_1(A)$ nor $\rho_2(A)$ contain a non-zero compact operator.  Then $\rho_1$ and $\rho_2$ are approximately unitarily equivalent. In particular, any two representations of a simple, unital, infinite dimensional C*-algebra are approximately unitarily equivalent.
\end{theorem}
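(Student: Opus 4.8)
The plan is to deduce the theorem from its absorption form. First I would pass to the common quotient $A/\ker(\rho_1)=A/\ker(\rho_2)$, which is again separable, so that $\rho_1$ and $\rho_2$ may be taken faithful; the hypothesis that neither image contains a nonzero compact operator is inherited by the quotient, and, passing to essential subspaces, one may also take both representations nondegenerate. The core is then the following \emph{absorption lemma}: if $\pi\colon A\to B(\mh)$ is a faithful, nondegenerate representation with $\pi(A)\cap\mk(\mh)=\{0\}$, and $\sigma$ is any representation of $A$ on a separable Hilbert space $\mh_\sigma$, then $\pi\oplus\sigma$ is approximately unitarily equivalent to $\pi$, via unitaries $\mh\oplus\mh_\sigma\to\mh$. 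Granting this, the theorem follows by symmetry: applying the lemma to the pairs $(\rho_1,\rho_2)$ and $(\rho_2,\rho_1)$ gives $\rho_1\oplus\rho_2\sim\rho_1$ and $\rho_2\oplus\rho_1\sim\rho_2$, while $\rho_1\oplus\rho_2$ and $\rho_2\oplus\rho_1$ are honestly unitarily equivalent via the coordinate flip; transitivity of approximate unitary equivalence then yields $\rho_1\sim\rho_2$.

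To prove the absorption lemma I would invoke Voiculescu's quasicentral approximate unit. Since $\pi(A)\cap\mk(\mh)=\{0\}$, the ideal $\mk(\mh)$ inside the C*-algebra generated by it and $\pi(A)$ possesses an increasing quasicentral approximate unit, i.e.\ positive contractions $u_n$ with $u_n\to 1$ strongly and $\|u_n\pi(a)-\pi(a)u_n\|\to 0$ for each $a\in A$; as the $u_n$ are compact they can be replaced, by spectral truncation, with positive \emph{finite-rank} contractions enjoying the same properties. Setting $d_n=(u_n-u_{n-1})^{1/2}$ (with $u_0=0$) one gets finite-rank positive operators with $\sum_n d_n^2=1$ strongly, with $\sum_n d_n\pi(a)d_n$ norm-close to $\pi(a)$, and with ranges marching off to infinity inside $\mh$ while almost commuting with $\pi(A)$. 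Combining this with the fact that $\sigma$ is weakly contained in $\pi$ (both being faithful), I would manufacture, for every finite $\mf\subseteq A$ and $\ve>0$, an isometry $V\colon\mh_\sigma\to\mh$ whose range is almost orthogonal to any prescribed finite-rank subspace and which satisfies $\|V^*\pi(a)V-\sigma(a)\|<\ve$ and $\|\pi(a)VV^*-VV^*\pi(a)\|<\ve$ for all $a\in\mf$. Running this against a summable sequence of tolerances and an increasing exhaustion of $A$ by finite subsets, and telescoping the resulting near-intertwining isometries into a single unitary $\mh\oplus\mh_\sigma\to\mh$ in the usual Weyl--von Neumann manner, produces the desired equivalence $\pi\oplus\sigma\sim\pi$.

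The step I expect to be the main obstacle is precisely this construction: producing the isometries $V$ with simultaneous control of $\|V^*\pi(a)V-\sigma(a)\|$ and of the commutator norm $\|[\pi(a),VV^*]\|$, and then the bookkeeping required to patch them into one unitary implementing the equivalence to within the prescribed error. This is the technical heart of Voiculescu's theorem, and it is here that separability of $A$ (to run the exhaustion/diagonal argument) and the absence of nonzero compact operators in the image (so the approximate unit can be taken finite-rank, letting ranges be pushed off to infinity) are genuinely used.

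Finally, for the concluding assertion: if $A$ is simple, unital and infinite dimensional, then every nonzero representation of $A$ is faithful, so $\ker(\rho_1)=\{0\}=\ker(\rho_2)$; moreover $\rho_i(A)\cap\mk(\mh_i)$ is a closed two-sided ideal of the simple algebra $\rho_i(A)\cong A$, hence is $\{0\}$ or all of $\rho_i(A)$ --- and the latter cannot occur, since a unital infinite-dimensional C*-algebra of operators is never contained in the compacts (its unit would be a finite-rank projection, forcing finite dimensionality). Thus the hypotheses of the theorem hold, and any two representations of $A$ are approximately unitarily equivalent.
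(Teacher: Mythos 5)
The paper does not prove this statement at all: it is quoted as Voiculescu's theorem and used as a black box, with the proof referred to \cite{Voiculescu76,Davidson96}. Your outline reproduces the standard argument from those sources: pass to the common quotient $A/\ker\rho_1$ so both representations are faithful, prove the absorption statement $\pi\oplus\sigma\sim_a\pi$ for a faithful nondegenerate $\pi$ with $\pi(A)\cap\mk(\mh)=\{0\}$ by means of a quasicentral, finite-rank approximate unit and a Weyl--von Neumann style telescoping of near-intertwining isometries, and then conclude $\rho_1\sim_a\rho_1\oplus\rho_2\cong\rho_2\oplus\rho_1\sim_a\rho_2$ by symmetry. The reduction, the symmetry step, and the ``in particular'' clause (simplicity forces faithfulness, and a unital infinite-dimensional C*-algebra cannot embed into the compacts since its unit would be a finite-rank projection) are correct and complete as written.

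The gap is exactly the one you flag yourself: the absorption lemma is asserted, not proved. The construction, for each finite $\mf\subseteq A$ and $\ve>0$, of an isometry $V\colon\mh_\sigma\to\mh$ with $\Vert V^*\pi(a)V-\sigma(a)\Vert<\ve$ and $\Vert \pi(a)VV^*-VV^*\pi(a)\Vert<\ve$, together with the diagonal/telescoping bookkeeping that assembles these into one unitary, is the entire analytic content of Voiculescu's theorem (in Davidson's account it runs through completely positive compressions, Stinespring dilation and Glimm-type arguments), so as a self-contained proof your text is incomplete even though the surrounding architecture is right. One smaller caveat: the phrase ``passing to essential subspaces, one may also take both representations nondegenerate'' conceals a hypothesis rather than disposing of one. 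For unital $A$, a nondegenerate representation is never approximately unitarily equivalent to one with a nonzero degenerate summand (test the unit: the norm difference is $1$), so the theorem as stated is to be read with the standing convention that representations are nondegenerate, as in Davidson; your reduction is part of that convention, not a free normalization.
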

The following corollary shows why Voiculescu's theorem also goes by the name ``Noncommutative Weyl-von Neumann-Berg Theorem:"
\begin{theorem}[Voiculescu's Theorem \cite{Voiculescu76,Davidson96}] \label{thm:voi2} Let $\pi$ be a representation of $A.$  Then $\pi$ is approximately unitarily equivalent to a direct sum of irreducible representations.
\end{theorem}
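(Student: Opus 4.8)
The plan is to deduce this from Theorem~\ref{thm:voi1}. Fix a representation $\pi\colon A\to B(\mh)$ on a separable Hilbert space $\mh$; we may assume $\pi$ is nondegenerate, since the degenerate summand is a zero representation and plays no role. The obstruction to applying Theorem~\ref{thm:voi1} directly to $\pi$ is that $\pi(A)$ can contain nonzero compact operators --- for instance when $\pi$ has a finite dimensional irreducible subrepresentation --- so the first step is to split off the part of $\pi$ responsible for this. Let $\mh_a$ be the closed linear span of all irreducible $\pi(A)$-invariant subspaces of $\mh$; it is a $\pi(A)$-reducing subspace. A by now standard argument (see \cite{Davidson96}) --- built on the classical fact that a representation whose image has nontrivial intersection with $K(\mh)$ must have an irreducible subrepresentation, together with a maximality argument that terminates because $\mh$ is separable --- shows that $\mh_a$ is an at most countable orthogonal direct sum of irreducible invariant subspaces, and that $\sigma:=\pi|_{\mh_a^\perp}$ satisfies $\sigma(A)\cap K(\mh_a^\perp)=\{0\}$. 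Thus $\pi\cong\big(\bigoplus_i\pi_i\big)\oplus\sigma$ with each $\pi_i$ irreducible, and it remains only to show that $\sigma$ is approximately unitarily equivalent to a direct sum of irreducible representations.

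For this, recall that a C*-algebra is semisimple and that $\mathrm{Prim}(A)$ is second countable (as $A$ is separable), so $\ker\sigma$ is the intersection of a sequence $(P_n)$ of primitive ideals (a countable dense family of primitive ideals containing $\ker\sigma$ works, since the relevant hulls are closed). For each $n$ pick an irreducible representation $\sigma_n$ with $\ker\sigma_n=P_n$ and set $\rho=\bigoplus_n\sigma_n^{(\infty)}$, where the superscript denotes infinite multiplicity; this is a direct sum of irreducible representations on a separable Hilbert space. Then $\ker\rho=\bigcap_nP_n=\ker\sigma$, and $\rho(A)$ contains no nonzero compact operator: if $\rho(a)$ were compact then each $\sigma_n(a)\otimes 1$ would be compact, forcing $\sigma_n(a)=0$ for all $n$, i.e.\ $a\in\bigcap_nP_n=\ker\rho$. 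Since $\sigma(A)$ likewise contains no nonzero compact operator and $\ker\sigma=\ker\rho$, Theorem~\ref{thm:voi1} shows that $\sigma$ and $\rho$ are approximately unitarily equivalent. Extending the implementing unitaries by the identity on $\mh_a$ shows that $\pi\cong\big(\bigoplus_i\pi_i\big)\oplus\sigma$ is approximately unitarily equivalent to $\big(\bigoplus_i\pi_i\big)\oplus\rho$, a direct sum of irreducible representations.

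The step I expect to require the most care is the first one: producing the ``discrete part'' $\mh_a$ and checking that the complementary representation $\sigma$ has no compact operators in its image. The underlying sublemma --- that a representation with a nonzero compact operator in its image has an irreducible subrepresentation, which one proves by passing to a minimal finite--rank projection of the (closed, two--sided) ideal of compacts sitting inside the image --- is classical but not immediate. Everything after that is routine: the construction of $\rho$ and the verification of the hypotheses of Theorem~\ref{thm:voi1}, together with the elementary observations that approximate unitary equivalence is preserved under adding a common direct summand, and that a nondegenerate representation with no compacts in its image acts on an infinite dimensional space, so that the Hilbert spaces of $\sigma$ and $\rho$ are isomorphic, as required even to speak of their unitary equivalence.
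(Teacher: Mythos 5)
The paper does not actually prove Theorem \ref{thm:voi2}; it is quoted from \cite{Voiculescu76,Davidson96}, so there is no internal argument to compare yours against. Your deduction from Theorem \ref{thm:voi1} is correct and is in substance the standard argument from the cited sources: split off the closed span $\mh_a$ of the irreducible invariant subspaces (the usual maximal-family/intertwiner argument shows $\mh_a$ is an orthogonal sum of irreducibles, and the classical lemma on compacts shows the complementary summand $\sigma$ satisfies $\sigma(A)\cap K(\mh_a^{\perp})=\{0\}$, since by construction it has no irreducible subrepresentation); then manufacture a direct sum of irreducibles $\rho$ with $\ker\rho=\ker\sigma$ and no nonzero compacts in its range by taking infinite multiples of irreducible representations whose kernels run through a countable dense subset of the hull of $\ker\sigma$ (your parenthetical is right: the closure of $\{P_n\}$ in $\textrm{Prim}(A)$ is the hull of $\bigcap_n P_n$, so density in the closed hull forces $\bigcap_n P_n=\ker\sigma$); finally Theorem \ref{thm:voi1} applies, and the discrete summand is added back. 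Two small points are matters of convention rather than gaps: the statement must be read for nondegenerate representations (for non-unital $A$ a nonzero degenerate summand, i.e.\ a zero representation, is not approximately unitarily equivalent to any direct sum of irreducibles --- this is harmless in the paper, where the representations come from unitary representations of $G$), and in the trivial case $\mh_a^{\perp}=\{0\}$ you should simply note that $\pi$ is already a direct sum of irreducibles rather than run the construction with $\ker\sigma=A$. With those remarks your argument is complete, and your closing dimension count (no compacts plus nondegeneracy forces the spaces of $\sigma$ and $\rho$ to be separable infinite dimensional) is exactly the point needed for the unitaries in Theorem \ref{thm:voi1} to make sense.
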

The following lemma is well-known and follows more or less from Definition \ref{def:QD}:
\begin{lemma} \label{lem:appuqd} If $\rho_1$ and $\rho_2$ are approximately unitarily equivalent representations and $\rho_1$ is quasidiagonal, then  $\rho_2$ is also quasidiagonal.
\end{lemma}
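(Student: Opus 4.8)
The plan is to transport the block-diagonalizing projections for $\rho_1$ over to $\mh_2$ along the approximating unitaries. Fix a finite set $\mf\subseteq A$ and $\ve>0$; we must produce mutually orthogonal finite rank projections in $B(\mh_2)$ that block-diagonalize $\rho_2$ on $\mf$ to within $\ve$. Since $\rho_1$ is quasidiagonal (Definition \ref{def:QD}), choose mutually orthogonal finite rank projections $Q_k\in B(\mh_1)$ with $\max_{a\in\mf}\V\bigoplus_k Q_k\rho_1(a)Q_k-\rho_1(a)\V<\ve/3$; if one instead takes the definition of quasidiagonality in terms of an increasing net of finite rank projections almost commuting with $\rho_1(A)$, the argument below is identical after conjugating that net by a single $u$.

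Because $\mf$ is finite and $\rho_1$ and $\rho_2$ are approximately unitarily equivalent, there is a \emph{single} unitary $u\colon\mh_1\to\mh_2$ with $\V\rho_1(a)-u^*\rho_2(a)u\V<\ve/3$ for every $a\in\mf$. Put $P_k:=uQ_ku^*$; these are mutually orthogonal finite rank projections in $B(\mh_2)$. Fix $a\in\mf$ and set $b:=u^*\rho_2(a)u$. Since $\bigoplus_k P_k\rho_2(a)P_k=u\big(\bigoplus_k Q_kbQ_k\big)u^*$ and $\rho_2(a)=ubu^*$, unitary invariance of the norm together with the triangle inequality yields
\begin{align*}
\Big\V\bigoplus_k P_k\rho_2(a)P_k-\rho_2(a)\Big\V
&=\Big\V\bigoplus_k Q_kbQ_k-b\Big\V\\
&\le \Big\V\bigoplus_k Q_k(b-\rho_1(a))Q_k\Big\V+\Big\V\bigoplus_k Q_k\rho_1(a)Q_k-\rho_1(a)\Big\V+\V\rho_1(a)-b\V.
\end{align*}
The middle term is $<\ve/3$ by the choice of the $Q_k$, the last term is $<\ve/3$ by the choice of $u$, and the first term is $<\ve/3$ because the block-diagonal compression $X\mapsto\bigoplus_k Q_kXQ_k$ is a contraction (the norm of a block-diagonal operator equals the supremum of the norms of its blocks, and $\V Q_kXQ_k\V\le\V X\V$). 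Hence the whole expression is $<\ve$ for every $a\in\mf$, and since $\mf$ and $\ve$ were arbitrary, $\rho_2$ is quasidiagonal.

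There is no substantive obstacle; the one point deserving attention is the order of the quantifiers. One must first fix $\mf$ and $\ve$, then extract from the approximate unitary equivalence a single unitary $u$ that works for all of $\mf$ at once --- possible precisely because $\mf$ is finite --- and only afterwards invoke the quasidiagonality of $\rho_1$. The content of the lemma is simply that quasidiagonality of a representation depends only on its approximate unitary equivalence class, and this rests on nothing more than the contractivity of block-diagonal compressions.
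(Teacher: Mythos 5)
The paper gives no proof of this lemma (it is cited as well-known and ``follows more or less from Definition \ref{def:QD}''), so the comparison must be with what a complete proof from that definition requires. Your central estimate is fine: block-diagonal compression is contractive, so conjugating a block-diagonalizing family by one unitary adapted to the finite set $\mf$ gives the three-$\ve/3$ bound. The gap is that this verifies a finite-set block-diagonal criterion, not Definition \ref{def:QD} itself, which demands a single sequence of finite rank projections converging strongly to the identity with $\Vert [P_n,\rho_2(a)]\Vert\to 0$ for \emph{every} $a\in A$. Your parenthetical claim that with that definition ``the argument below is identical after conjugating that net by a single $u$'' is exactly where the proof breaks: a single unitary controls only the finitely many elements of $\mf$, so $\limsup_n\Vert [uP_nu^*,\rho_2(a)]\Vert$ is only bounded by $2\Vert u^*\rho_2(a)u-\rho_1(a)\Vert$, which need not vanish for general $a$. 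One genuinely needs a sequence of unitaries $u_n$ adapted to an exhaustion of a dense sequence in $A$, and then the one subtle point of this lemma appears: conjugating by varying unitaries can destroy the strong convergence $P_n\to 1$. The standard repair is to choose the projection \emph{after} the unitary: fix dense sequences $(a_j)\subseteq A$ and $(\xi_j)\subseteq\mh_2$; for each $n$ pick $u_n$ with $\Vert\rho_1(a_j)-u_n^*\rho_2(a_j)u_n\Vert<1/n$ for $j\le n$; then, with $u_n$ fixed, pick $m(n)$ so large that $\Vert[P_{m(n)},\rho_1(a_j)]\Vert<1/n$ and $\Vert u_nP_{m(n)}u_n^*\xi_j-\xi_j\Vert<1/n$ for $j\le n$ (possible since $u_nP_mu_n^*\to1$ strongly as $m\to\infty$ for fixed $n$). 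Then $R_n:=u_nP_{m(n)}u_n^*$ witnesses quasidiagonality of $\rho_2$. So the quantifier issue that matters is the opposite of the one you emphasize.

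If you prefer to keep the block-diagonal route, two things must be added: (i) the family $Q_k$ should be required to sum strongly to the identity (otherwise the conjugated family need not exhaust $\mh_2$ in any useful sense), and (ii) the equivalence between the finite-set block-diagonal criterion and Definition \ref{def:QD} must be invoked or proved in both directions --- extracting such decompositions from the QD sequence for $\rho_1$, and passing from the criterion for $\rho_2$ back to a sequence of projections tending strongly to $1$. That equivalence is standard but not free, and it is precisely where the vector/strong-convergence bookkeeping sketched above reappears; as written, your proof treats it as automatic.
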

We call a C*-algebra \emph{quasidiagonal} if it admits a faithful quasidiagonal representation (Definition \ref{def:QD}). 
 \begin{lemma} \label{lem:Voic} Let $A$ be a C*-algebra with $T_1$ primitive ideal space. Then $A$ is strongly quasidiagonal if and only if every primitive quotient of $A$ is quasidiagonal.
\end{lemma}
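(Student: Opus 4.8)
For the forward implication, suppose $A$ is strongly quasidiagonal and let $P$ be a primitive ideal, say $P=\ker\pi$ for an irreducible representation $\pi$ of $A$. Then $\pi$ is quasidiagonal by hypothesis, and the induced faithful representation $\bar\pi$ of $A/P$ has exactly the same range of operators inside $B(\mh)$ as $\pi$; since quasidiagonality of a representation is a condition on that range, $\bar\pi$ is quasidiagonal, so $A/P$ is a quasidiagonal C*-algebra. Nothing beyond Definition \ref{def:QD} is needed here.

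For the converse, assume every primitive quotient of $A$ is quasidiagonal and let $\rho$ be any representation of $A$. By Voiculescu's Theorem \ref{thm:voi2}, $\rho$ is approximately unitarily equivalent to a direct sum $\bigoplus_i\pi_i$ of irreducible representations. A direct sum of quasidiagonal representations is quasidiagonal (immediate from the local form of Definition \ref{def:QD}), and quasidiagonality is an invariant of approximate unitary equivalence (Lemma \ref{lem:appuqd}), so it suffices to show each $\pi_i$ is quasidiagonal. Set $P_i=\ker\pi_i$. This is a primitive ideal, hence a maximal ideal because $A$ has $T_1$ primitive ideal space, so $B_i:=A/P_i$ is a \emph{simple} C*-algebra, and it is quasidiagonal by assumption. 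Regarding $\pi_i$ as a (nonzero) representation of $B_i$, we have reduced the lemma to the claim that every simple quasidiagonal C*-algebra $B$ is strongly quasidiagonal.

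To prove that claim I would split on whether $B$ is elementary. Every nonzero representation of the simple algebra $B$ is faithful, hence isometric onto a closed copy of $B$. If $B$ is not elementary, then no representation $\sigma$ of $B$ can have a nonzero compact operator in its image: such an operator would generate a nonzero closed ideal of $\sigma(B)\cong B$, which by simplicity is all of $\sigma(B)$, forcing $B$ to be a simple C*-algebra of compact operators and thus elementary. Therefore any two nonzero representations of $B$ have the same kernel $\{0\}$ and images containing no nonzero compacts, so Voiculescu's Theorem \ref{thm:voi1} makes them approximately unitarily equivalent; comparing an arbitrary such representation with a faithful quasidiagonal one (which exists since $B$ is quasidiagonal) and invoking Lemma \ref{lem:appuqd} shows every representation of $B$ is quasidiagonal. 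If instead $B$ is elementary, $B\cong\mathcal{K}(\mh_0)$, and every representation of $\mathcal{K}(\mh_0)$ is an amplification of the identity representation on $\mh_0$ together with a (possibly absent) zero summand; the identity representation is quasidiagonal because finite-rank operators are norm dense in $\mathcal{K}(\mh_0)$ and any finite family of finite-rank operators is left fixed by compression to a single finite-dimensional subspace, while amplifications and direct sums of quasidiagonal representations stay quasidiagonal.

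The one point that requires care is the interaction with the compacts: Voiculescu's uniqueness theorem \ref{thm:voi1} is usable only when the representations omit the compact operators, so the elementary primitive quotients have to be handled directly, as above. Because elementary C*-algebras and their representations are entirely explicit this is not a serious obstruction, and the substance of the proof is the passage --- via the $T_1$ hypothesis and Voiculescu's theorems \ref{thm:voi1} and \ref{thm:voi2} --- from an arbitrary representation of $A$ to quasidiagonality of the simple primitive quotients.
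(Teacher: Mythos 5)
Your proof is correct and follows essentially the same route as the paper: reduce via Theorem \ref{thm:voi2} and Lemma \ref{lem:appuqd} to irreducible representations, use the $T_1$ hypothesis to make the kernel maximal, and then invoke Voiculescu's Theorem \ref{thm:voi1} to conclude that the simple quasidiagonal quotient is strongly quasidiagonal. The only difference is that you make explicit the trivial forward direction and the elementary (compact-operator) case, which the paper leaves implicit in its appeal to Theorem \ref{thm:voi1}.
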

\begin{proof} By Theorem \ref{thm:voi2} and Lemma \ref{lem:appuqd}  we need only check that irreducible representations of $A$ are quasidiagonal. Let $\pi$ be an irreducible representation of $A.$  Then the kernel of $\pi$, call it $J$, is maximal.   By assumption, $A/J$ is a quasidiagonal C*-algebra.  Since $A/J$  is simple, by Theorem \ref{thm:voi1}, $A/J$ is strongly quasidiagonal.  Since $\pi$ induces a representation $\widetilde{\pi}$ of $A/J$ such that $\pi(A)=\widetilde{\pi}(A/J)$ we are done.
\end{proof}
\begin{remark}  Lemma \textup{\ref{lem:Voic}} does not hold for C*-algebras without a $T_1$ primitive ideal space, even if the algebra has a ``tractable" representation theory.  Larry Brown constructed an interesting example in \textup{\cite{Brown84}} of a primitive, Type I C*-algebra such that every primitive quotient is quasidiagonal, but which also admits a faithful, non quasidiagonal representation. See \textup{\cite{Brown96}} for generalizations.
\end{remark}
As mentioned above, the space of characters of $G$ (see Definiton \ref{def:char}) also serves as a replacement for the dual.   For virtually nilpotent groups there is a nice connection between primitive ideals and characters:
\begin{theorem}[Howe \textup{\cite{Howe77}}, Carey \& Moran \textup{\cite[Theorem 2.1]{Carey84}}] \label{thm:CM} Let $G$ be virtually nilpotent.  An ideal $J$ of $C^*(G)$ is primitive if and only if there is a character $\phi$ on $G$ such that
\begin{equation*}
J=\{ x\in C^*(G): \phi(x^*x)=0 \}.
\end{equation*}
\end{theorem}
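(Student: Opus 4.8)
The plan is to read the equivalence off the structure of the primitive quotients of $C^*(G)$, which for a virtually nilpotent group is governed by Mackey's theory of induced representations (the ``orbit method''). The core input I would establish is: \emph{for every primitive ideal $J$ of $C^*(G)$ the quotient $C^*(G)/J$ is a simple unital C*-algebra with a unique tracial state, and for every character $\phi$ the quotient $C^*(G)/J_\phi$ by its GNS ideal $J_\phi:=\{x\in C^*(G):\phi(x^*x)=0\}$ is again of this form.} This would be proved by induction, set up after a reduction (via direct limits) to $G$ finitely generated, hence polycyclic-by-finite, and then (via the Mackey machine on a finite-index nilpotent subgroup) to $G$ finitely generated nilpotent. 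In the inductive step one chooses a central $Z\le G$, most often with $Z\cong\Z$ so that $G/Z$ is shorter (with the finite torsion subgroup absorbed into a preliminary step); an irreducible representation $\pi$ with $\ker\pi=J$ restricts to $Z$ as a scalar character $\chi\colon Z\to\T$, and according to whether $\chi$ has finite or infinite order one either factors $\pi$ through a smaller quotient or realizes $\pi$ as induced from the stabilizer of $\chi$ and invokes imprimitivity. Pushed through, this identifies $C^*(G)/J$ up to Morita equivalence with a twisted group C*-algebra $C^*(\Z^n,\omega)$; when $\omega$ is totally skew this is a simple noncommutative torus with a unique trace, and when $\omega$ degenerates one recurses on a shorter group. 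The same analysis records how the character of $G$ corresponds, step by step, to the trace on the quotient.

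Granting this input, both implications are immediate. If $\phi$ is a character, then $\phi$ extends to a tracial state of $C^*(G)$, so $J_\phi$ is a two-sided ideal equal to $\ker\pi_\phi$; by the input $C^*(G)/J_\phi\cong\pi_\phi(C^*(G))$ is simple, and a nonzero simple C*-algebra is primitive, whence $J_\phi$ is primitive. Conversely, if $J$ is primitive it is maximal by Theorem \ref{thm:pog}, so $A:=C^*(G)/J$ is simple; by the input $A$ has a tracial state $\tau$, necessarily faithful since a nonzero trace on a simple C*-algebra has trivial kernel. Writing $q\colon C^*(G)\to A$ for the quotient map, the function $\phi:=\tau\circ q$ restricted to $G$ is normalized, central and positive-definite, and faithfulness of $\tau$ gives $\{x\in C^*(G):\phi(x^*x)=0\}=\ker q=J$. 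That $\phi$ is a character is then formal: the tracial states of $C^*(G)$ annihilating $J$ form a weak-$*$ closed face of the trace simplex, affinely isomorphic to the trace space of $A$, which by uniqueness is a single point --- and a one-point face is an extreme point.

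The real content, and the main obstacle, is the structural input. The Mackey-machine induction must simultaneously control simplicity of the quotients, existence and uniqueness of their traces, and the precise matching of characters with traces under each reduction; the delicate points are the case split by the order of the restricted central character, the handling of imprimitivity systems when $\pi$ is induced, the passage from general nilpotent groups to finitely generated ones, and the finite-index and torsion reductions in the virtually nilpotent case. The degenerate-cocycle branch, where $C^*(\Z^n,\omega)$ is not itself simple, is what forces the recursion and must be organized so that the induction hypothesis applies to a strictly shorter group.
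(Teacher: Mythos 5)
A preliminary remark on the comparison you were asked about: the paper does not prove Theorem \ref{thm:CM} at all --- it is imported as a black box from Howe and from Carey--Moran --- so the only question is whether your argument stands on its own. The formal part of your proposal is fine, and in fact you assume more than you need: uniqueness of the trace on a primitive quotient is unnecessary, since you may take any extreme point of the (weak-$*$ compact) face of tracial states of $C^*(G)$ vanishing on $J$, and any tracial state on a simple unital C*-algebra is automatically faithful; likewise, for the direction ``character $\Rightarrow$ primitive'' you do not need simplicity of $C^*(G)/J_\phi$ at all: since $\phi$ is extreme, $\pi_\phi$ is a factor representation, so $J_\phi=\ker\pi_\phi$ is a prime ideal, and prime ideals of separable C*-algebras are primitive (for countable $G$, the case of interest here). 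No Mackey machinery is needed for that half.

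The genuine gap is that your ``core input'' for the hard direction --- that every primitive quotient of $C^*(G)$ carries a tracial state (indeed is simple with unique trace) --- is exactly the substance of the theorem, and it is asserted rather than proved. A priori a simple quotient of $C^*(G)$ could be traceless; ruling this out is precisely where the nilpotent structure must enter, and within this paper the finiteness properties of primitive quotients (quasidiagonality, hence stable finiteness) are \emph{consequences} of Theorem \ref{thm:CM}, so they cannot be invoked here. The orbit-method sketch does not close this: the reduction to finitely generated $G$ via direct limits is unjustified (restrictions of characters to subgroups need not be extreme, and restrictions of irreducible representations need not be factorial or have the expected kernels); the identification of $C^*(G)/J$ up to Morita equivalence with a twisted algebra $C^*(\Z^n,\omega)$, with the trace and the character tracked through stabilizers, torsion, and the finite extension, is itself a substantial theorem you do not prove; and the recursion in the degenerate-cocycle branch is never actually set up (what invariant decreases, and how the character is transported). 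As written, the input is at least as strong as the conclusion, so the argument is circular in effect. A self-contained proof has to do what Howe and Carey--Moran do --- a direct analysis of characters and factor representations of virtually nilpotent groups (central inductivity, induction from finite-index and central subgroups) --- which is presumably why the paper simply cites their result.
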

\begin{remark} A lot of interesting work has been devoted to deciding precisely the relationship between the primitive ideal space of a nilpotent group and its space of characters.  This correspondence doesn't impact the present work but may be of interest to the reader; we refer those interested to  Kaniuth's article \textup{\cite{Kaniuth06}} and the references therein.
\end{remark}
In summation, to prove that \emph{every} representation of a virtually nilpotent group $G$ is quasidiagonal is now reduced to proving that for every maximal ideal $J$ of $C^*(G)$, there is \emph{some} quasidiagonal representation of $C^*(G)/J.$  Moreover, we have a natural choice for this representation: the GNS representation associated with the character $\phi$ that induces $J.$ This  is precisely the route taken in this paper.

In Section \ref{sec:SQUG} we show that every faithful character on a unitriangular group gives rise to a quasidiagonal representation. We achieve by first proving quasidiagonality of those representations induced from the center and then reduce the general case to the centrally induced case.
The rest of the section is devoted to proving one can reduce the case of a finitely generated, torsion free group with infinite non-central conjugacy classes to the unitriangular groups. Finally, in Section \ref{sec:RTFC} we reduce the general solvable, virtually nilpotent case to the aforementioned classes of groups and characters to prove the following:
\begin{theorem} \label{thm:fgnil} Let $G$ be a finitely generated,  solvable virtually nilpotent group.  Then $C^*(G)$ is strongly quasidiagonal.
\end{theorem}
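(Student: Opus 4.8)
The strategy is to run the reduction sketched above --- from representations, to primitive quotients, to characters --- and then to induct, feeding in the results of Sections~\ref{sec:SQUG} and~\ref{sec:RTFC}. Since $G$ is virtually nilpotent, $\pr$ is $T_1$ by Theorem~\ref{thm:pog}, so by Lemma~\ref{lem:Voic} it suffices to show that every primitive quotient of $C^*(G)$ is quasidiagonal, and by Theorem~\ref{thm:CM} this amounts to showing that the GNS representation $\pi_\phi$ is quasidiagonal for every character $\phi$ of $G$. One point is worth isolating at the outset: by Theorem~\ref{thm:pog} each quotient $C^*(G)/\ker\pi_\phi$ is \emph{simple}, so by Theorem~\ref{thm:voi1} and Lemma~\ref{lem:appuqd} it is quasidiagonal as soon as \emph{some} faithful representation of it is; this flexibility, together with the stability of quasidiagonality, is what licenses the manipulations below. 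As $G$ is polycyclic-by-finite it has a finite Hirsch length $h(G)$, and I would induct on it. If $h(G)=0$ then $G$ is finite, $C^*(G)$ is finite dimensional, and we are done; so assume $h(G)\ge 1$ and that the theorem holds for Hirsch length $<h(G)$.

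First I would reduce to the torsion-free nilpotent case. Since $G$ is finitely generated and virtually nilpotent it has a finite-index torsion-free nilpotent \emph{normal} subgroup $N\trianglelefteq G$ --- pass to a finite-index nilpotent subgroup, then to a finite-index torsion-free subgroup using residual finiteness, then to its core in $G$ --- and $h(N)=h(G)$ while $G/N$ is finite and solvable. Choosing a subnormal series $N=N_0\trianglelefteq N_1\trianglelefteq\cdots\trianglelefteq N_r=G$ with each $N_{i+1}/N_i$ cyclic of prime order, it suffices to show: if $M\trianglelefteq L$ with $[L:M]$ prime and every primitive quotient of $C^*(M)$ is quasidiagonal, then every primitive quotient of $C^*(L)$ is quasidiagonal. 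Given a maximal ideal $J\trianglelefteq C^*(L)$, the ideal $J\cap C^*(M)$ is $L$-invariant and equals the intersection of the maximal ideals of $C^*(M)$ in a single $L/M$-orbit, so $B:=C^*(M)/(J\cap C^*(M))$ is a finite direct sum of simple quasidiagonal C*-algebras, hence quasidiagonal. Now $C^*(L)/J$ is generated by a copy of $B$ together with finitely many unitaries arising from coset representatives of $M$ in $L$; representing $B$ faithfully and quasidiagonally on $\mh$ and passing to $\mh\otimes\ell^2(L/M)$, those unitaries act as monomial matrices whose permutation part commutes \emph{exactly} with $P\otimes 1$ for every finite-rank projection $P$ on $\mh$. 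Choosing $P$ to quasidiagonalize the finitely many relevant elements of $B$ shows $C^*(L)/J$ is quasidiagonal, and iterating up the series reduces us to $G=N$ torsion-free nilpotent.

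Now fix a character $\phi$ of the torsion-free nilpotent group $N$ and put $Z=Z(N)$; since $h(N)\ge 1$ this is a nontrivial, torsion-free, finitely generated abelian group, so $Z\cong\Z^k$ with $k\ge 1$. Because $\pi_\phi$ generates a finite factor, $\pi_\phi(Z)$ lies in its centre, so $\pi_\phi|_Z=\chi\cdot 1$ for a character $\chi$ of $Z$. If $\chi$ is not injective then $\ker\chi$ is infinite, $\pi_\phi$ factors through $N/\ker\chi$ --- a finitely generated nilpotent group of strictly smaller Hirsch length --- and we conclude by the inductive hypothesis; likewise, if $\pi_\phi$ is not faithful on $N$ its kernel is infinite (a torsion-free group has no nontrivial finite normal subgroup) and $\pi_\phi$ factors through a quotient of strictly smaller Hirsch length. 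So we may assume $\chi$ injective and $\phi$ faithful. If $N$ is abelian, $\pi_\phi$ is one dimensional and there is nothing to prove. If $N$ is non-abelian, then, being finitely generated torsion-free nilpotent, it has infinite non-central conjugacy classes --- exactly the setting of Section~\ref{sec:SQUG}, where the quasidiagonality of $\pi_\phi$ is reduced to that of faithful characters on unitriangular groups, and the latter is established there. This closes the induction.

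Two steps in this plan require care. The monomial-matrix argument has to be run for a possibly non-split extension $1\to M\to L\to\Z/p\to 1$, so one must check that the resulting cocycle, with values in unitaries of $B$, does not spoil the commutator estimate --- it does not, since it only adds finitely many elements of $B$ to the set to be quasidiagonalized --- and the spectral bookkeeping relating $\pr$ to $\mathrm{Prim}(N)$ (the $L$-invariance and orbit structure of $J\cap C^*(M)$) must be pinned down. The genuine obstacle, though, is the input from Section~\ref{sec:SQUG}: proving that the GNS representation of a faithful character on a unitriangular group is quasidiagonal. The plan there is to treat first the characters induced from the centre, where the representation has a concrete, crossed-product-like form and explicit finite-rank approximants can be written down by hand, and then to reduce a general faithful character to a centrally induced one while keeping the approximants under control; carrying out that reduction is where the real work of the paper lies.
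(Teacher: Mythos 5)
Your skeleton is the same as the paper's: reduce via Theorem \ref{thm:pog}, Theorem \ref{thm:CM} and Lemma \ref{lem:Voic} to showing $\pi_\phi$ is QD for every character $\phi$, induct on the Hirsch number, invoke Theorem \ref{thm:noncconj} (hence Section \ref{sec:SQUG}) for faithful characters and the inductive hypothesis for non-faithful ones, and absorb the finite part one prime at a time. The faithful/non-faithful dichotomy and the torsion-free reduction are fine (for the claim that a finitely generated torsion-free nilpotent group has all non-central conjugacy classes infinite you should cite Baer's lemma together with Mal'cev's theorem that $G/Z(G)$ is torsion free; the paper instead passes to a further finite-index subgroup). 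The genuine gap is in your replacement for the paper's Lemma \ref{thm:find}, i.e.\ the passage from $M$ to $L$ with $[L:M]=p$.

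Concretely: the monomial-matrix representation you build on $\mathcal{H}\otimes\ell^2(L/M)$ from a faithful QD representation of $B=C^*(M)/(J\cap C^*(M))$ is a representation of the twisted crossed product $B\rtimes (L/M)$, equivalently of $C^*(L)/J_L$ where $J_L$ is the ideal of $C^*(L)$ generated by $J\cap C^*(M)$; it is a representation of $C^*(L)/J$ only when $J=J_L$. In general $J$ may strictly contain $J_L$ (roughly, when the $L/M$-orbit of the relevant primitive ideal of $C^*(M)$ is a singleton and the induced automorphism of $B$ has an inner power, so that $B\rtimes(L/M)$ is not simple), and then your argument only shows that an algebra mapping \emph{onto} $C^*(L)/J$ is QD. Quasidiagonality does not pass to quotients, and simplicity of the quotient does not help: by homotopy invariance of quasidiagonality the algebra $\{f\in C([0,1],\OT):f(0)\in\C 1\}$ is QD, yet it surjects onto the simple non-QD algebra $\OT$. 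Bridging exactly this gap is the content of the bulk of Section \ref{sec:RTFC}: in the paper's notation one extends $\pi$ to $id_p\otimes\sigma$ on $M_p\otimes C^*(H)$, uses the maximality of $\ker\pi$ and the algorithm of Lemma \ref{lem:twistJ} to produce an intertwining unitary lying in $\sigma(C^*(H))$ itself (Lemmas \ref{lem:ue} and \ref{lem:case1}), conjugates so that $(id_p\otimes\sigma)(C^*(G))$ contains $1_p\otimes\sigma(C^*(H))$, and then shows the support projection $Q$ of $(id_p\otimes\sigma)(\ker\pi)$ lies in $M_p\otimes 1$ (with \cite[Theorem 3.8.11]{Pedersen79} handling the inequivalent case), so that $C^*(G)/\ker\pi$ \emph{embeds} into the QD algebra $(1-Q)(M_p\otimes\sigma(C^*(H)))(1-Q)$. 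None of this appears in your sketch; your ``two steps requiring care'' flag the cocycle and the orbit bookkeeping (the assertion that $J\cap C^*(M)$ is the intersection of a single orbit of maximal ideals is itself plausible but unproved, and the paper avoids needing it by working with $id_p\otimes\sigma$), but miss this central obstruction, so the proposal as written does not prove the theorem.
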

\begin{remark} The adjective ``solvable" is Theorem \textup{\ref{thm:fgnil}} is unfortunate because it is almost certainly unnecessary.  We were unable to extend the proof of Lemma \textup{\ref{thm:find}} to cover all virtually nilpotent groups, but since all nilpotent groups are (trivially) solvable,  Theorem \textup{\ref{thm:fgnil}} does cover all nilpotent groups.
\end{remark}
As strong quasidiagonality is preserved by taking inductive limits we immediately obtain the following:
\begin{corollary} \label{cor:genqd} Let $G$ be a countable group that is the inductive limit of solvable virtually nilpotent groups, then $C^*(G)$ is strongly quasidiagonal. In particular $C^*(G)$ is strongly quasidiagonal whenever $G$ is a discrete nilpotent group.
\end{corollary}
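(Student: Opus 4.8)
The plan is to reduce the statement directly to Theorem~\ref{thm:fgnil}, using that quasidiagonality of a unitary representation is a condition on finite subsets of the group and therefore transfers along the structure maps of an inductive limit. Write $G=\varinjlim_i G_i$ with structure homomorphisms $\alpha_i\colon G_i\to G$, where each $G_i$ is finitely generated, solvable and virtually nilpotent; then $G=\bigcup_i\alpha_i(G_i)$ and the subgroups $\alpha_i(G_i)$ form a directed family. (For the final assertion one simply takes $(G_i)_i$ to be the family of finitely generated subgroups of a discrete nilpotent group $G$: each such subgroup is finitely generated nilpotent, hence in particular finitely generated solvable virtually nilpotent.)

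First I would fix an arbitrary unitary representation $\pi\colon G\to B(\mh)$, a finite set $\mf\subseteq G$ and $\ve>0$, and show the defining approximation for quasidiagonality holds. Since the subgroups $\alpha_i(G_i)$ are directed and exhaust $G$, there is an index $i$ and a finite set $\mf_0\subseteq G_i$ with $\mf\subseteq\alpha_i(\mf_0)$. Now $\pi\circ\alpha_i$ is a unitary representation of $G_i$; by Theorem~\ref{thm:fgnil} the C*-algebra $C^*(G_i)$ is strongly quasidiagonal, which --- passing between unitary representations of $G_i$ and representations of $C^*(G_i)$ --- is exactly the statement that every unitary representation of $G_i$ is quasidiagonal. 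Hence there are mutually orthogonal finite-rank projections $Q_n\in B(\mh)$ with
\[
\max_{s\in\mf_0}\Big\V\bigoplus_n Q_n(\pi\circ\alpha_i)(s)Q_n-(\pi\circ\alpha_i)(s)\Big\V<\ve .
\]
Since every element of $\mf$ is of the form $\alpha_i(s)$ for some $s\in\mf_0$, the same projections satisfy $\max_{t\in\mf}\V\bigoplus_n Q_n\pi(t)Q_n-\pi(t)\V<\ve$. As $\mf$ and $\ve$ were arbitrary, $\pi$ is quasidiagonal; as $\pi$ was arbitrary, $C^*(G)$ is strongly quasidiagonal. Specializing to the finitely generated subgroups of a discrete nilpotent group yields the ``in particular'' clause.

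I do not expect a genuine obstacle here: the only point to keep straight is the (routine) translation between strong quasidiagonality of the C*-algebra $C^*(G_i)$ and the local, finite-subset formulation of quasidiagonality for unitary representations of $G_i$ recalled in the introduction, which is what allows the approximation to be pulled back along $\alpha_i$ without any loss. All of the substantive content is already contained in Theorem~\ref{thm:fgnil} (and, for the reduction to finitely generated groups, in the elementary fact that every finitely generated subgroup of a nilpotent group is again finitely generated nilpotent).
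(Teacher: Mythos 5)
Your argument is correct and is essentially the paper's: the corollary is obtained there by invoking the fact that strong quasidiagonality passes to inductive limits, which is exactly the finite-subset locality argument you spell out, reducing everything to Theorem \ref{thm:fgnil}. One small point to keep straight: the groups $G_i$ in the hypothesis need not be finitely generated, but your proof goes through verbatim if you replace $G_i$ by the subgroup generated by $\mf_0$ (a finitely generated subgroup of a solvable virtually nilpotent group is again solvable and virtually nilpotent), so Theorem \ref{thm:fgnil} still applies.
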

Let $A_\infty$ be the group of even permutations of $\N$ with finite support.  Then $A_\infty$ is a simple group that is an inductive limit of finite groups.  Therefore $C^*(A_\infty)$ is an inductive limit of finite dimensional C*-algebras, so it is strongly quasidiagonal.  More generally, by exploiting the fact that virtual nilpotency is not preserved by inductive limits while strong quasidiagonality is preserved by inductive limits one can easily obtain examples of non virtually nilpotent groups that are strongly quasidiagonal. For finitely generated groups, Theorem \ref{thm:fgnil}  raises the possibility of detecting virtual nilpotence  C*-algebraically, leading to the following 
\begin{question} \label{ques:fgsqd} Are there any \emph{finitely generated} strongly quasidiagonal groups that are not virtually nilpotent? [See note added in proof]
\end{question}
In the non virtually nilpotent case, a new difficulty appears: Moore and Rosenberg showed \cite{Moore76} that if a group is not virtually nilpotent, it cannot have a $T_1$ primitive ideal space.  
\subsection{Definitions and Notation}
We refer the reader to Brown \& Ozawa \cite{Brown08} for information on amenability, group C*-algebras and quasidiagonality (and any other operator algebraic topic covered here)   and to  P. Hall's lecture notes \cite{Hall88} for information about nilpotent groups.
\begin{definition} \label{def:QD} Let $\mh$ be a separable Hilbert space and $\mathcal{S}\subseteq B(\mh)$ be a subset of the bounded linear operators on $\mh.$ We say that $\mathcal{S}$ is a \textbf{quasidiagonal} (henceforth QD) set of operators if there is a sequence of finite rank orthogonal projections $P_n\in B(\mh)$ such that $P_n(\xi)\rightarrow \xi$ as $n\rightarrow\infty$ for all $\xi\in\mh$ and
\begin{equation*}
\lim_{n\rightarrow\infty}\V P_nT-TP_n  \V=0, \quad \textrm{ for all }\quad T\in \mathcal{S}.
\end{equation*}
A representation $(\pi,\mh)$ of a C*-algebra $A$ is QD if $\pi(A)$ is a QD set of operators.  A C*-algebra is QD if it admits a faithful QD representation.  A C*-algebra is strongly QD \textup{\cite{Hadwin87}} if every representation is QD. 
\end{definition}
\begin{definition} \label{def:nil}
Let $G$ be a group. We denote by $e$ the neutral element of $G.$  We denote the center of $G$ as $Z(G).$ One defines $Z_0(G)=\{ e \}$ and inductively defines $Z_n(G)$ so that $Z(G/Z_{n-1}(G))=Z_n(G)/Z_{n-1}(G).$ We say that $G$ is \textbf{nilpotent} if $Z_c(G)=G$ for some $c\in\N.$ We say $G$ is \textbf{virtually nilpotent} if there is a finite index subgroup $N\leq G$ such that $N$ is nilpotent.  

A group $G$ is \textbf{solvable} if there is a series $\{ e \}=G_{n+1}\trianglelefteq G_n\trianglelefteq \cdots G_2\trianglelefteq G_1=G$ such that $G_i/G_{i+1}$ is abelian. Every nilpotent group is solvable and the only fact about solvable groups we require here is that if $G$ is solvable and non-trivial, then the quotient of $G$ by its commutator subgroup is abelian and non-trivial. It is clear from the definitions that every solvable group is amenable.
\end{definition}
\begin{definition} \label{def:char} Let $\phi$ be a positive definite function on $G$ with $\phi(e)=1.$  We write $(\pi_\phi,L^2(G,\phi))$ for the associated Gelfand-Naimark-Segal (GNS) representation of $C^*(G).$ We say $\phi$ is a \textbf{trace} if $\phi(g^{-1}xg)=\phi(x)$ for all $x,g\in G.$ The set of traces form a weak-* compact, convex subset of $C^*(G)^*.$  We call the extreme points of the trace space \textbf{characters}.  A character $\phi$ is \textbf{faithful} if $\phi(g)=1$ implies $g=e.$ Every character gives rise to a factor representation of $G$ (see \textup{\cite[Theorem 6.7.3]{Dixmier77}}), therefore every character  is multiplicative on $Z(G)$.
\end{definition}
\section{Quasidiagonality of Faithful Representations} \label{sec:SQUG}

In this section we show that characters $\phi$ on unitriangular groups $U_d$ (see Section \ref{sec:unitriangular} for definitions),  that vanish off of their center $Z_d$,   give rise to QD representations.  For $x\in U_d$, the idea is to view $\pi_\phi(x)$ (see Definition \ref{def:char}) as a weighted permutation operator on $B(\ell^2(U_d/Z_d)).$ Since $U_d/Z_d$ is residually finite, we then employ Orfanos's work \cite{Orfanos11} on quasidiagonality of residually finite groups to prove the permutation part of $\pi_\phi(x)$ is QD and then  use specific properties of $U_d$ to show that the weights are ``locally approximately constant on cosets" (see Lemma \ref{lem:diagqd}) to obtain QD of the representation $\pi_\phi.$

\subsection{Orfanos's Projections} \label{sec:Orfanos} We quickly recall Orfanos's algorithm \cite{Orfanos11} for generating QD sequences for the left regular representation of a residually finite amenable group. 

 Let $G$ be an amenable residually finite group and let $(L_n)$ be a sequence of finite index normal subgroups with trivial intersection.  Let $K_n$ be a (finite) set of $G/L_n$ coset representatives such that there is an exhaustive F\o lner sequence $F_n\subseteq K_n$ (i.e. for every $x\in G$ we have $| F_n \Delta F_nx ||F_n|^{-1}\rightarrow0$ as $n\rightarrow\infty$ and $\cup F_n=G.$ )

One then defines the functions $\phi_n:G\rightarrow [0,1]$ by
\begin{equation*}
\phi_n(x)=\sqrt{\frac{|K_n\cap F_nx|}{|F_n|}}.
\end{equation*} 
Notice that 
\begin{equation}
\textrm{supp}(\phi_n)=F_n^{-1}K_n. \label{eq:supp}
\end{equation}
Then for each $y\in K_n$, one defines the norm one vector $\xi_{yL_n}=\sum_{\alpha\in yL_n}\phi_n(\alpha)\delta_\alpha\in \ell^2G$ and $P_n\in B(\ell^2G)$ as the (finite rank) projection onto $\textrm{span}\{ \xi_{yL_n}: y\in K_n \}.$ Let $\lambda:G\rightarrow B(\ell^2G)$ be the left regular representation. 
\begin{theorem}[Orfanos \textup{\cite[Lemma 1.7]{Orfanos11}}] \label{thm:stefanos} For every $s\in G$, we have  
\begin{equation*}
 \lim_{n\rightarrow\infty}\V \lambda(s)P_n-P_n\lambda(s)  \V= 0, 
\end{equation*} 
and $P_n$ converges pointwise to the identity.
\end{theorem}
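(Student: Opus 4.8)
The plan is to analyze the projections $P_n$ very concretely. First I would note that since the cosets $yL_n$, $y\in K_n$, are pairwise disjoint, the vectors $\{\xi_{yL_n}:y\in K_n\}$ form an orthonormal system, so $P_n=\sum_{y\in K_n}\xi_{yL_n}\xi_{yL_n}^*$ is an orthogonal sum of rank-one projections. For the pointwise convergence $P_n\to I$ I would compute directly: if $y(g)\in K_n$ denotes the representative of $gL_n$, then $P_n\delta_g=\phi_n(g)\,\xi_{y(g)L_n}$, hence $\V P_n\delta_g-\delta_g\V^2=1-\phi_n(g)^2$. Now $\phi_n(g)^2=|K_n\cap F_ng|/|F_n|$, and since $F_n\subseteq K_n$ we have $F_ng\setminus K_n\subseteq F_ng\setminus F_n$, so $|K_n\cap F_ng|\ge|F_n|-|F_ng\Delta F_n|$; the F\o lner condition makes $|F_ng\Delta F_n|=|F_n\Delta F_ng^{-1}|=o(|F_n|)$, so $\phi_n(g)\to1$. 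Since $\V P_n\V\le 1$ and the $\delta_g$ are total in $\ell^2 G$, this gives $P_n\to I$ strongly.

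For the almost-commutation the idea is to conjugate rather than work with $\lambda(s)P_n$ and $P_n\lambda(s)$ separately. Because $\lambda(s)$ is unitary, $\V\lambda(s)P_n-P_n\lambda(s)\V=\V\lambda(s)P_n\lambda(s)^*-P_n\V$, and $\lambda(s)P_n\lambda(s)^*$ is the orthogonal projection onto the span of $\{u_y:y\in K_n\}$, where $u_y:=\lambda(s)\xi_{yL_n}=\sum_{\gamma\in syL_n}\phi_n(s^{-1}\gamma)\delta_\gamma$ is a unit vector supported on the coset $syL_n$. Grouping the summands of $P_n$ by these cosets, $P_n=\sum_{y\in K_n}v_yv_y^*$ where $v_y:=\xi_{z(y)L_n}$ and $z(y)\in K_n$ is the representative of $syL_n$; note $v_y$ is also supported on $syL_n$. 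Since the cosets $syL_n$ ($y\in K_n$) are pairwise disjoint,
\[
\V\lambda(s)P_n-P_n\lambda(s)\V=\V\lambda(s)P_n\lambda(s)^*-P_n\V=\max_{y\in K_n}\V u_yu_y^*-v_yv_y^*\V\le\max_{y\in K_n}\V u_y-v_y\V,
\]
using the elementary fact that $\V uu^*-vv^*\V=\sqrt{1-|\la u,v\ra|^2}\le\V u-v\V$ for unit vectors $u,v$. So it remains to prove that $\max_{y\in K_n}\V u_y-v_y\V\to0$.

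Here $\V u_y-v_y\V^2=\sum_{\gamma\in syL_n}|\phi_n(s^{-1}\gamma)-\phi_n(\gamma)|^2$, and the step I expect to be the main obstacle is obtaining a bound on this that is \emph{uniform in $y$}: the crude estimate $\sum_{y\in K_n}\V u_y-v_y\V^2=\V\lambda(s)\phi_n-\phi_n\V_2^2$ is useless because $\V\phi_n\V_2^2=|K_n|\to\infty$, so one must argue coset by coset. The plan is to first linearize using $(\sqrt a-\sqrt b)^2\le|a-b|$ for $a,b\ge0$, which gives $|\phi_n(s^{-1}\gamma)-\phi_n(\gamma)|^2\le|F_n|^{-1}\big|\,|K_n\cap F_ns^{-1}\gamma|-|K_n\cap F_n\gamma|\,\big|$; then use $\big|\,|K_n\cap A|-|K_n\cap B|\,\big|\le|K_n\cap(A\Delta B)|$ with $A=F_ns^{-1}\gamma$ and $B=F_n\gamma$, so that $A\Delta B=(F_ns^{-1}\Delta F_n)\gamma$. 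Summing over $\gamma$ in the single coset $syL_n$ then reduces everything to the combinatorial fact that $\sum_{\gamma\in zL_n}|K_n\cap E\gamma|\le|E|$ for any finite $E\subseteq G$ and any coset $zL_n$ — which holds because for each fixed $e\in E$ there is at most one $\gamma\in zL_n$ with $e\gamma\in K_n$, namely the one for which $e\gamma$ is the chosen representative of the fixed coset $ezL_n$. Applying this with $E=F_ns^{-1}\Delta F_n$ yields $\V u_y-v_y\V^2\le|F_ns^{-1}\Delta F_n|/|F_n|=|F_n\Delta F_ns|/|F_n|$ for every $y\in K_n$, which tends to $0$ by the F\o lner condition. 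Combining the three paragraphs gives $\V\lambda(s)P_n-P_n\lambda(s)\V\le\big(|F_n\Delta F_ns|/|F_n|\big)^{1/2}\to0$, as desired.
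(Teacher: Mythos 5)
This statement is imported from Orfanos (the paper cites \cite[Lemma 1.7]{Orfanos11} and gives no proof of its own), so there is no internal argument to compare against; your proof is correct and self-contained, and it follows essentially the same route as Orfanos's original lemma rather than a genuinely different one. The pointwise convergence step is right: $\Vert P_n\delta_g-\delta_g\Vert^2=1-\phi_n(g)^2\le |F_n\Delta F_ng|\,|F_n|^{-1}\to 0$, and boundedness plus totality of the $\delta_g$ upgrades this to strong convergence. For the commutator, passing to $\Vert\lambda(s)P_n\lambda(s)^*-P_n\Vert$ and decomposing both projections block-diagonally over the partition of $G$ into the cosets $syL_n$, $y\in K_n$, is exactly the right move, since $u_y=\lambda(s)\xi_{yL_n}$ and $v_y=\xi_{z(y)L_n}$ live on the same coset and the cosets are orthogonal; your rank-one estimate $\Vert u_yu_y^*-v_yv_y^*\Vert\le\Vert u_y-v_y\Vert$ is valid for unit vectors. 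The crucial point, which you identify and handle correctly, is the uniformity in $y$: the counting fact that for fixed $e$ there is exactly one $\gamma\in zL_n$ with $e\gamma\in K_n$ (the same fact that makes the $\xi_{yL_n}$ unit vectors) gives $\sum_{\gamma\in zL_n}|K_n\cap E\gamma|\le|E|$, hence $\Vert u_y-v_y\Vert^2\le |F_n\Delta F_ns|\,|F_n|^{-1}$ independently of $y$, and the F\o lner condition finishes the argument. No gaps.
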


\subsection{Quasidiagonality of Special Representations of $U_d$} \label{sec:unitriangular}  
Let $d\geq1$ and let $U_d\leq GL_d(\Z)$ be the unitriangular group of $d\times d$ matrices; that is $U_d$ are those upper triangular matrices with 1's along the diagonal. It is fairly straightforward to verify that each $U_d$ is nilpotent. We freely use the fact that $U_d$ is contained in the ring $M_d(\Z)$ and $\pm$ will always refer to ring operations and $1\in U_d$ the identity matrix.  In particular, notice that $U_d$ is closed under the operation $x -y+1$, where $x,y\in U_d$.

For an element $a\in M_d(\Z)$, we write $a_{ij}\in \Z$ for the $(i,j)$ matrix coefficient of $a$ and we write $e_{ij}$ for the matrix with value $1$ in the $(i,j)$ coordinate and 0 elsewhere (for example $e_{12}\not\in U_d$ while $1+e_{12}\in U_d$). Let $Z_d$ be the center of $U_d.$ Then 
\begin{equation*}
Z_d=\{ 1+ne_{1d}:n\in\Z \}\cong \Z.
\end{equation*} 
We now make the ideas of Section \ref{sec:Orfanos} specific to our situation. In order to reduce the number of subscripts by one, we define $G=U_d$ and $Z=Z_d.$
Throughout this section we fix a character  $\phi$  such that $\phi(x)=0$ if $x\not\in Z$  and for definiteness suppose $\phi$ on $Z\cong\Z$ is defined by $\phi(n)=e^{2\pi i\theta n}$ for some fixed $\theta\in [0,1).$  In this section we prove that the GNS representation associated with $\phi$ is QD (Theorem \ref{thm:QDtrace})
\\\\
As in Definition \ref{def:char} we denote by $L^2(G,\phi)$ the GNS Hilbert space associated with $\phi$ and $\pi_\phi:G\rightarrow B(L^2(G,\phi))$ the GNS representation.
\begin{definition} \label{def:coset}
We define a set of  coset representatives of $G/Z$ as
\begin{equation*}
C=\{ x\in G:x_{1d}=0  \}.
\end{equation*}
For each $x\in G/Z $ we write $\widetilde{x}$ for the unique element of $C$ such that $\widetilde{x}Z=x.$
\end{definition}
\begin{definition} For $a\in G$, let $\delta_a\in L^2(G,\phi)$ be the canonical image of $a$ and let $\delta_{aZ}\in \ell^2(G/Z)$ be the canonical image of $aZ.$
\end{definition}
\begin{lemma}  \label{lem:ONB}  Let $a\in G$ and let $a'=a(1-a_{1d}e_{1d})\in C.$  Then 
\begin{equation}
\delta_a=\phi(1+a_{1d}e_{1d})\delta_{a'}=e^{2\pi i\theta a_{1d}}\delta_{a'}. \label{eq:proportion}
\end{equation}
Hence the set $X=\{ \delta_x:x\in C \}$ is an orthonormal basis for $L^2(G,\phi).$
\end{lemma}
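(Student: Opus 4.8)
The plan is to establish the proportionality (\ref{eq:proportion}) by a short computation inside the GNS inner product and then read off the orthonormal basis statement. First I would verify that $a'=a(1-a_{1d}e_{1d})$ really is the distinguished representative $\widetilde{aZ}$ of the coset $aZ$. Since $1+a_{1d}e_{1d}\in Z$ has inverse $1-a_{1d}e_{1d}$, the elements $a$ and $a'$ lie in the same $Z$-coset, so it suffices to check that $a'\in C$, i.e.\ $(a')_{1d}=0$; this is the one-line matrix computation $(a')_{1d}=a_{1d}-a_{11}a_{1d}=0$, using $a_{11}=1$. Hence $a'=\widetilde{aZ}$ and $a=a'z$ with $z:=1+a_{1d}e_{1d}\in Z$.

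Next I would show $\delta_a=\delta_{a'z}=\phi(z)\,\delta_{a'}$. The key fact is that $\phi$ is a trace whose restriction to $Z\cong\Z$ is the \emph{unimodular} character $1+ne_{1d}\mapsto e^{2\pi i\theta n}$, so $\phi(z^{-1})=\overline{\phi(z)}$ and $|\phi(z)|=1$. Using the defining inner product $\langle\delta_u,\delta_v\rangle=\phi(v^{-1}u)$ on $L^2(G,\phi)$ together with the centrality of $z$, I expand
\begin{equation*}
\V \delta_{a'z}-\phi(z)\delta_{a'}\V^{2}=\phi(e)-\phi(z)\phi(z^{-1})-\overline{\phi(z)}\phi(z)+|\phi(z)|^{2}\phi(e),
\end{equation*}
and the four terms cancel to $0$. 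Therefore $\delta_a=\phi(z)\delta_{a'}=e^{2\pi i\theta a_{1d}}\delta_{a'}$, which is exactly (\ref{eq:proportion}).

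For the basis statement: if $x\ne y$ in $C$ then $xZ\ne yZ$, hence $y^{-1}x\notin Z$ and $\langle\delta_x,\delta_y\rangle=\phi(y^{-1}x)=0$, while $\langle\delta_x,\delta_x\rangle=\phi(e)=1$; thus $X$ is an orthonormal set. Since the GNS construction guarantees that $\{\delta_a:a\in G\}$ has dense linear span in $L^2(G,\phi)$, and since (\ref{eq:proportion}) exhibits each $\delta_a$ as a scalar multiple of some $\delta_{a'}\in X$, the span of $X$ is dense, so $X$ is an orthonormal basis. I do not expect a genuine obstacle here: the entire lemma rests on the single observation that centrality of $z$ together with unimodularity of $\phi|_Z$ forces the GNS vectors attached to elements of a fixed $Z$-coset to be unimodular scalar multiples of one another, and the computation above merely quantifies this. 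The only steps requiring any care are the matrix identity $(a')_{1d}=0$ and the cancellation in the displayed norm, both routine.
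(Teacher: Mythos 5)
Your proposal is correct and follows essentially the same route as the paper: orthonormality from $\phi$ vanishing off $Z$, proportionality of $\delta_a$ and $\delta_{a'}$ from the unimodular value $\phi(1+a_{1d}e_{1d})$, and density via the GNS construction. The only cosmetic difference is that the paper deduces the proportionality from the equality case of Cauchy--Schwarz (two unit vectors with inner product of modulus one), whereas you expand $\V\delta_{a'z}-\phi(z)\delta_{a'}\V^2$ directly and show it vanishes; you also spell out the check that $a'\in C$, which the paper leaves implicit.
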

\begin{proof} If $x\neq y$ with $x,y\in C$, then $y^{-1}x\not\in Z$, hence $\phi(y^{-1}x)=0$, proving that $X$ is an orthonormal set.  Let $a\in G.$  Then  $\la  \delta_a,\delta_{a'} \ra_\phi=\phi(1+a_{1d}e_{1d})=e^{2\pi ia_{1d}\theta}.$  Since both $\delta_a$ and $\delta_{a'}$ are norm 1, they are proportional, in fact we must have $\delta_a=e^{2\pi ia_{1d}\theta}\delta_{a'}.$  This proves (\ref{eq:proportion}) and shows that the closure of the span of $X$ is  $L^2(G,\phi).$
\end{proof}

Let $W: L^2(G,\phi)\rightarrow \ell^2(G/Z)$ be defined by $W(\delta_{x})=\delta_{xZ}$ where $x\in C.$ Then $W$ is unitary by Lemma \ref{lem:ONB}. Let $y\in G$ and $x\in C.$  Then by Lemma \ref{lem:ONB}, we have
\begin{align*}
W\pi_\phi(y)W^*(\delta_{xZ})&=W\pi_\phi(y)\delta_x\\
&= \phi\Big(1+\sum_{i=2}^d y_{1i}x_{id}e_{1d}\Big)\delta_{yxZ}.
\end{align*}
For each $y\in G$ and $x\in C$ define 
\begin{equation} \label{eq:psidef}
\psi_y(x)=\phi\Big(1+\sum_{i=2}^d y_{1i}x_{id} e_{1d}\Big)=\exp\Big(    2\pi i\theta\sum_{i=2}^d y_{1i}x_{id}   \Big),
\end{equation}
and $D_y\in B(\ell^2(G/Z))$ by
\begin{equation}
D_y(\delta_{xZ})=\psi_y(x)\delta_{xZ}.
\end{equation}
Then $W\pi_\phi(y)W^*=\lambda_{G/Z}(y)D_y,$ where $\lambda_{G/Z}:G\rightarrow B(\ell^2(G/Z))$ is the left regular representation.  Summarizing the above discussion produces 
\begin{lemma} \label{lem:reduce}
To prove that $\pi_\phi$ is a QD representation, it suffices to prove that 
\newline
$\{ \lambda_{G/Z}(y), D_y:y\in G  \}\subseteq B(\ell^2(G/Z))$ is a QD set of operators.
\end{lemma}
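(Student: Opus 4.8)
The plan is to observe that the identity $W\pi_\phi(y)W^*=\lambda_{G/Z}(y)D_y$ derived just above already transports the representation from $L^2(G,\phi)$ to $\ell^2(G/Z)$; everything then comes down to two elementary stability properties of Definition \ref{def:QD}: quasidiagonality is preserved under unitary equivalence, and a set of projections witnessing quasidiagonality of a collection of operators also witnesses it for their products. The hypothesis of the lemma is phrased for the \emph{separated} generators $\lambda_{G/Z}(y)$ and $D_y$, which is strictly stronger than what the representation $\pi_\phi$ requires (only the products $\lambda_{G/Z}(y)D_y$ appear), so this is the easy direction of an equivalence.

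First I would record that $\pi_\phi$ is a QD representation of $C^*(G)$ if and only if the set $\{\pi_\phi(y):y\in G\}$ is a QD set of operators. Indeed, if finite rank projections $P_n\to 1$ strongly and satisfy $\V P_n\pi_\phi(y)-\pi_\phi(y)P_n\V\to 0$ for every $y$, then the collection of all $T$ with $\V P_nT-TP_n\V\to 0$ is closed under adjoints and scalars, is norm-closed, and is closed under products by the estimate $\V P_nab-abP_n\V\le\V P_na-aP_n\V\,\V b\V+\V a\V\,\V P_nb-bP_n\V$; hence it is a C*-subalgebra of $B(L^2(G,\phi))$ containing the generators, so it contains all of $\pi_\phi(C^*(G))$. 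Thus it suffices to produce projections witnessing quasidiagonality of the generators $\pi_\phi(y)$.

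Next I would transport the question through the unitary $W$. Assuming $\{\lambda_{G/Z}(y),D_y:y\in G\}$ is QD, fix finite rank projections $Q_n\in B(\ell^2(G/Z))$ with $Q_n\to 1$ strongly and $\V Q_n\lambda_{G/Z}(y)-\lambda_{G/Z}(y)Q_n\V\to 0$, $\V Q_nD_y-D_yQ_n\V\to 0$ for every $y$. Since $\lambda_{G/Z}(y)$ and $D_y$ are unitaries, the product estimate gives
\begin{equation*}
\V Q_n\lambda_{G/Z}(y)D_y-\lambda_{G/Z}(y)D_yQ_n\V\le \V Q_n\lambda_{G/Z}(y)-\lambda_{G/Z}(y)Q_n\V+\V Q_nD_y-D_yQ_n\V\longrightarrow 0,
\end{equation*}
so the same $Q_n$ witness quasidiagonality of $\{\lambda_{G/Z}(y)D_y:y\in G\}=\{W\pi_\phi(y)W^*:y\in G\}$. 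Setting $P_n=W^*Q_nW$ yields finite rank projections on $L^2(G,\phi)$ with $P_n\to 1$ strongly (as $W$ is unitary and $Q_n\to 1$ strongly) and, conjugating the commutator by $W$, $\V P_n\pi_\phi(y)-\pi_\phi(y)P_n\V=\V Q_n(W\pi_\phi(y)W^*)-(W\pi_\phi(y)W^*)Q_n\V\to 0$. This is exactly quasidiagonality of $\{\pi_\phi(y):y\in G\}$, hence of $\pi_\phi$ by the first step.

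There is essentially no obstacle here, and I would present it as a short bookkeeping argument: the only substantive point is the product estimate, which is clean precisely because both $\lambda_{G/Z}(y)$ and $D_y$ are unitaries of norm one. The payoff of the stronger formulation is entirely downstream: having separated the permutation part $\lambda_{G/Z}$ from the diagonal part $D_y$, one can attack them independently, using Orfanos's Theorem \ref{thm:stefanos} for the $\lambda_{G/Z}(y)$ and the ``locally approximately constant on cosets'' behavior of the weights (Lemma \ref{lem:diagqd}) for the $D_y$, so that a \emph{single} Orfanos sequence of projections approximately commutes with both families at once.
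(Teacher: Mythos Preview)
Your argument is correct and is exactly the reasoning the paper intends: in the paper this lemma is stated without a separate proof, merely as ``summarizing the above discussion,'' i.e.\ the identity $W\pi_\phi(y)W^*=\lambda_{G/Z}(y)D_y$ together with the obvious fact that quasidiagonality passes to products and through unitary conjugation. You have simply written out these elementary stability properties in full.
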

\begin{remark} The benefit of twisting the representation $\pi_\phi$ over to $\ell^2(G/Z)$ is that it allows us to use the results of Section \textup{\ref{sec:Orfanos}} without modification.
\end{remark}
\begin{definition} Let $C$ be as in Definition \textup{\ref{def:coset}}. Let $a,b\in \mathbb{R}.$  Define
\begin{equation*}
G[a,b]=\{  x\in C: x_{ij}\in [a,b] \textrm{ if }i<j  \}\subseteq C\subseteq G.
\end{equation*}
Fix $n\in\mathbb{N}.$ Define the normal subgroup $\widetilde{L}_n\trianglelefteq G$ as
\begin{equation*}
\widetilde{L}_n=\{  x\in G: x_{ij}\in n\Z \quad \textrm{for }i<j \},
\end{equation*}
and the set:
\begin{equation*}
\widetilde{K}_n=\left\{ \begin{array}{ll} G[-(n-1)/2,(n-1)/2] & \textrm{ if }n\textrm{ is odd}\\
                                                            G[-n/2+1,n/2] & \textrm{ if }n\textrm{ is even}
                                                            \end{array} \right..
\end{equation*}
Let $\pi:G\rightarrow G/Z$ be the quotient homomorphism.  We define $L_n=\pi(\widetilde{L}_n)$ and $K_n=\pi(\widetilde{K}_n).$  One then easily checks that $K_n$ is a set of coset representatives of $\pi(G)/L_n$. The sequence $K_n$  is exhaustive so it contains a F\o lner sequence. Let us now fix a sequence of finite sets $F_n\subseteq G/Z$ with the properties:
\begin{enumerate}
\item $F_n\subseteq K_n$
\item $\cup F_n=G/Z$
\item $(F_n)$ is a F\o lner sequence.
\item $\pi^{-1}(F_n^{-1})\cap C\subseteq G[-n^{1/4},n^{1/4}].$
\end{enumerate}
\end{definition}
We now show that the weights defined in (\ref{eq:psidef}) are locally almost constant on $L_n$-cosets.
\begin{lemma} \label{lem:diagqd}
For every $\ve>0$ and finite subset $\mf\subseteq G$, there is an $n\in\N$ such that 
\begin{equation*}
\max_{z\in\mf}\max_{y\in K_n}\max_{\alpha\in yL_n\cap F_n^{-1}K_n}|\psi_z(\widetilde{\alpha})-\psi_z(\widetilde{y})|<\ve.
\end{equation*}
\end{lemma}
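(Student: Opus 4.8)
The plan is to estimate $|\psi_z(\widetilde\alpha)-\psi_z(\widetilde y)|$ directly using the explicit exponential formula (\ref{eq:psidef}). Write $z=y$ is not the point; the subscript in $\psi$ runs over the \emph{fixed finite set} $\mf$, while $y$ ranges over $K_n$ and $\alpha$ over a single $L_n$-coset intersected with $F_n^{-1}K_n$. From (\ref{eq:psidef}),
\begin{equation*}
\psi_z(\widetilde\alpha)\overline{\psi_z(\widetilde y)}=\exp\Big(2\pi i\theta\sum_{i=2}^d z_{1i}(\widetilde\alpha_{id}-\widetilde y_{id})\Big),
\end{equation*}
so it suffices to make $\theta\sum_{i=2}^d z_{1i}(\widetilde\alpha_{id}-\widetilde y_{id})$ close to an integer (uniformly), and for that it is more than enough to make the real number $\sum_{i=2}^d z_{1i}(\widetilde\alpha_{id}-\widetilde y_{id})$ itself small after multiplying by $\theta$; since $z$ ranges over a finite set, the entries $z_{1i}$ are bounded by some constant $M=M(\mf)$, so the task reduces to bounding $\sum_{i=2}^d|\widetilde\alpha_{id}-\widetilde y_{id}|$.

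The key structural input is that $\alpha$ and $y$ lie in the same $L_n$-coset of $G/Z$, i.e. $\widetilde\alpha$ and $\widetilde y$ differ (in $G$) by an element of $\widetilde L_n Z$, hence all off-diagonal entries of $\widetilde\alpha-\widetilde y$ that are \emph{not} in position $(1,d)$ lie in $n\Z$; combined with the constraint $\alpha\in F_n^{-1}K_n$, condition (4) in the definition forces $\widetilde\alpha_{ij}\in[-n^{1/4},n^{1/4}]$ for all $i<j$ with $(i,j)\neq(1,d)$ (here I use that $\widetilde\alpha\in C$ and that $\pi^{-1}(F_n^{-1})\cap C\subseteq G[-n^{1/4},n^{1/4}]$, noting $\widetilde\alpha\in \pi^{-1}(F_n^{-1}K_n)$ and that the relevant entries of $K_n$-representatives are $O(n)$ — one has to be slightly careful here and may need to track the entries through the product $F_n^{-1}K_n$, using that $U_d$ is closed under $x-y+1$ so entrywise sizes are controlled). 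Since also $\widetilde y\in \widetilde K_n$ has entries of size $\le n/2$, and for $2\le i\le d$ we have $(i,d)\neq(1,d)$, the differences $\widetilde\alpha_{id}-\widetilde y_{id}$ for $i\ge 2$ are \emph{both} divisible by $n$ (from the $\widetilde L_n$ relation, since position $(i,d)$ with $i\ge2$ is not $(1,d)$) \emph{and} of absolute value at most $n^{1/4}+n/2<n$ for $n$ large. A nonzero multiple of $n$ cannot have absolute value strictly less than $n$, so $\widetilde\alpha_{id}=\widetilde y_{id}$ for all $i=2,\dots,d$ once $n$ is large enough, whence the exponent vanishes identically and the difference $|\psi_z(\widetilde\alpha)-\psi_z(\widetilde y)|$ is exactly $0$.

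So the final step is purely bookkeeping: choose $n$ large enough (depending on $\mf$ and implicitly on $d$, but not on $\ve$) that $n^{1/4}+n/2<n$, i.e. any $n\ge 3$ works for the size bound, with the genuine largeness used only to guarantee $\pi^{-1}(F_n^{-1})\cap C\subseteq G[-n^{1/4},n^{1/4}]$ controls the entries appearing. The main obstacle I anticipate is the careful justification that every relevant entry $\widetilde\alpha_{id}$ with $\alpha\in yL_n\cap F_n^{-1}K_n$ is genuinely $O(n^{1/4})$ or $O(n)$ rather than something larger: one must express $\alpha$ as a product of an element with $F_n^{-1}$-controlled entries and an element of $K_n$, pass through the matrix multiplication (which introduces cross terms $\sum z_{1k}\cdots$ of size $O(n\cdot n^{1/4})$ in the worst off-diagonal slots), and check these never contaminate the specific entries in column $d$ being compared — or, more robustly, observe that we only ever compare entries in positions $(i,d)$ with $i\ge 2$, that these positions are \emph{not} $(1,d)$, and that the $\widetilde L_n$-congruence already pins them down mod $n$ regardless of their exact size, so all one truly needs is the crude bound $|\widetilde\alpha_{id}|,|\widetilde y_{id}|<n/2$ for $n$ large, which follows from (4) and the definition of $\widetilde K_n$. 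With that reduction the lemma is immediate.
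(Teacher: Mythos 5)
Your reduction to controlling the exponent $\theta\sum_{i=2}^d z_{1i}(\widetilde{\alpha}_{id}-\widetilde{y}_{id})$ modulo $\Z$ is the right starting point, and the congruence half of your argument is fine: since $\alpha\in yL_n$, the matrix $\widetilde{\alpha}-\widetilde{y}+1$ lies in $\widetilde{L}_n$, so each difference $\widetilde{\alpha}_{id}-\widetilde{y}_{id}$ with $i\geq 2$ is divisible by $n$. The gap is the size bound. You claim $|\widetilde{\alpha}_{id}|<n/2$ ``follows from (4)'', but condition (4) only controls lifts of elements of $F_n^{-1}$, whereas $\alpha$ ranges over $F_n^{-1}K_n$: multiplying a matrix with entries of size $O(n^{1/4})$ by one with entries of size $O(n)$ produces cross terms, so entries of $\widetilde{\alpha}$ in column $d$ can be of size roughly $dn^{5/4}$. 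Already for $d=4$ the $(2,4)$ entry of the product picks up a term $(\widetilde{f^{-1}})_{23}(\widetilde{k})_{34}$ of size up to about $n^{1/4}\cdot n/2$. Hence $\widetilde{\alpha}_{id}-\widetilde{y}_{id}$ is a multiple of $n$ whose absolute value may greatly exceed $n$, and your conclusion that it must vanish---so that the difference of the $\psi$'s is exactly $0$ and $n$ can be chosen independently of $\ve$---fails for $d\geq 4$ (it happens to be true in the Heisenberg case $d=3$, where the only relevant position $(2,3)$ involves no cross terms, which may be why the claim looked plausible).

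This is exactly where the paper's proof uses an idea your proposal is missing: a Diophantine choice of $n$. The paper sets $y^\alpha=\widetilde{\alpha}-\widetilde{y}+1\in G[-dn^{5/4},dn^{5/4}]\cap\widetilde{L}_n$, so that $z_{1k}y^\alpha_{kd}=nm_k$ with $|m_k|\leq dn^{1/2}$, and then chooses $n>p^4$ (with $p$ bounding the entries of $\mf$) to be, e.g., a large denominator of a continued-fraction convergent of $\theta$, so that $\mathrm{dist}(\theta n,\Z)<1/n$ and $2\pi d^2 n^{-1/2}<\ve$. Then $\mathrm{dist}(\theta z_{1k}y^\alpha_{kd},\Z)<dn^{-1/2}$ for each $k$, giving $|\psi_z(\widetilde{\alpha})-\psi_z(\widetilde{y})|=|\psi_{z'}(y^\alpha)-1|<2\pi d^2n^{-1/2}<\ve$. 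For irrational $\theta$, a nonzero multiple $nm_k$ times $\theta$ need not be near an integer for generic $n$, so without some such number-theoretic control on $\theta n$ your argument cannot be completed; the approximation step, not the bookkeeping, is the heart of the lemma.
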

\begin{proof} Let $p$ be the maximum value of the absolute value of all the entries of the matrices in $\mathcal{F}.$  Choose $n>p^4$ such that $\textrm{dist}(\theta n,\Z)<\frac{1}{n}$ and  $2\pi d^2n^{-1/2}<\ve$  ( for example, take $n$ to be a large enough denominator of  one of the convergents of $\theta$ in its continued fraction expansion,  see \cite{Leveque96}).

Fix an element $y\in K_n$ and let $\widetilde{y}\in \widetilde{K}_n$ be its lift. Let $\alpha\in yL_n\cap F_n^{-1}K_n.$  Set $y^\alpha=\widetilde{\alpha}-\widetilde{y}+1$. Then elementary calculations show that $y^\alpha\in G[-dn^{5/4},dn^{5/4}]\cap \widetilde{L}_n.$    
Let $z\in\mf.$ Then
\begin{align*}
\psi_z(\widetilde{\alpha})-\psi_z(\widetilde{y})&=\exp\Big(2\pi i\theta  \sum_{k=2}^d z_{1k}\widetilde{\alpha}_{kd} \Big)-\exp\Big(2\pi i\theta  \sum_{k=2}^d z_{1k}\widetilde{y}_{kd} \Big)\\
&=\exp(2\pi i\theta z_{1d})\Big[\exp\Big(2\pi i\theta  \sum_{k=2}^{d-1} z_{1k}\widetilde{\alpha}_{kd} \Big)-\exp\Big(2\pi i\theta  \sum_{k=2}^{d-1} z_{1k}\widetilde{y}_{kd} \Big)\Big]\\
&=\exp(2\pi i\theta z_{1d})\exp\Big(2\pi i\theta\sum_{k=2}^{d-1} z_{1k}\widetilde{y}_{kd} \Big)\Big[\exp\Big(2\pi i\theta  \sum_{k=2}^{d-1} z_{1k}y^\alpha_{kd} \Big)-1\Big].
\end{align*}
Define $z'=z(1-z_{1d}e_{1d})\in C.$ From the above identity we immediately obtain
\begin{equation*}
|\psi_z(\widetilde{\alpha})-\psi_z(\widetilde{y})|=|\psi_{z'}(y^\alpha)-1|. 
\end{equation*}
We have
\begin{equation*}
\psi_{z'}(y^\alpha)=\exp\Big(2\pi i\theta\sum_{k=2}^{d-1}z_{1k}y^\alpha_{kd}\Big)=\prod_{k=2}^{d-1}\exp(2\pi i\theta z_{1k}y^\alpha_{kd}).
\end{equation*}
Since $z\in\mf$ we have $|z_{1k}|\leq n^{1/4}$ for $1\leq k \leq d.$ Since $y^\alpha\in G[-dn^{5/4},dn^{5/4}]\cap \widetilde{L}_n$, for $2\leq k\leq d-1$, we have   $z_{1k}y^\alpha_{kd}=nm_k$ for some integer $m_k$ with $|m_k|\leq dn^{(1/4+1/4)}$, for each $2\leq k\leq d-1.$ Since $\textrm{dist}(\theta n,\Z)<n^{-1}$, we have $\textrm{dist}(\theta z_{1k}y^\alpha_{kd},\Z)<dn^{-1/2}$ for all $2\leq k\leq d-1.$ Hence $|\psi_{z'}(y^\alpha)-1|<2\pi d^2n^{-1/2}<\ve$. 
\end{proof}
\begin{theorem} \label{thm:QDtrace} Let $\phi$ be a character of $G=U_d$ that vanishes off $Z=Z(G)$, then $\pi_\phi$ is a QD representation.
\end{theorem}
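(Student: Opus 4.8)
The plan is to invoke Lemma \ref{lem:reduce} and show that the set $\{\lambda_{G/Z}(y), D_y : y\in G\}\subseteq B(\ell^2(G/Z))$ is quasidiagonal, using Orfanos's projections $P_n$ from Theorem \ref{thm:stefanos} built from the subgroups $L_n$, the coset representatives $K_n$, and the F\o lner sets $F_n$ fixed above. Since $G/Z=U_d/Z_d$ is nilpotent (hence amenable) and residually finite (the $L_n$ have trivial intersection), Theorem \ref{thm:stefanos} already gives $\V \lambda_{G/Z}(y)P_n-P_n\lambda_{G/Z}(y)\V\to0$ and that $P_n$ converges pointwise to the identity. So the entire burden is to show $\V D_yP_n-P_nD_y\V\to0$ for each $y\in G$; once that is done, $P_n(D_y + \lambda_{G/Z}(y))$ is asymptotically $(D_y+\lambda_{G/Z}(y))P_n$, which suffices, and in fact one gets the asymptotic commutation for each $\lambda_{G/Z}(y)$ and each $D_y$ separately, hence for the whole set of operators they generate.

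For the diagonal part, recall $P_n$ is the projection onto $\mathrm{span}\{\xi_{yL_n}: y\in K_n\}$ where $\xi_{yL_n}=\sum_{\alpha\in yL_n}\phi_n(\alpha)\delta_\alpha$ and $\mathrm{supp}(\phi_n)=F_n^{-1}K_n$ by \eqref{eq:supp}. The key point is that $D_y$ is diagonal in the $\delta_{xZ}$ basis with eigenvalues $\psi_y(x)$, and Lemma \ref{lem:diagqd} says these eigenvalues are nearly constant — within $\ve$ — on each piece $yL_n\cap F_n^{-1}K_n$ of the support of $\xi_{yL_n}$. First I would fix a finite set $\mf\subseteq G$ and $\ve>0$, take $n$ as in Lemma \ref{lem:diagqd}, and for each $z\in\mf$ and each $w\in K_n$ write $D_z\xi_{wL_n}=\sum_{\alpha\in wL_n\cap F_n^{-1}K_n}\psi_z(\widetilde\alpha)\phi_n(\alpha)\delta_\alpha$. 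Comparing this with $\psi_z(\widetilde w)\xi_{wL_n}$ and using $\sum_\alpha\phi_n(\alpha)^2=1$ together with Lemma \ref{lem:diagqd} gives $\V D_z\xi_{wL_n}-\psi_z(\widetilde w)\xi_{wL_n}\V<\ve$. Since the $\xi_{wL_n}$ are orthonormal and span the range of $P_n$, this means that on $P_n(\ell^2(G/Z))$ the operator $D_z$ is within $\ve$ of the diagonal operator $E_n$ sending $\xi_{wL_n}\mapsto \psi_z(\widetilde w)\xi_{wL_n}$; in particular $\V P_nD_zP_n - E_n\V<\ve$ with $E_n$ commuting with $P_n$, which forces $\V [P_n,D_z]\V$ to be small — more carefully, one argues $\V D_zP_n - P_nD_z\V = \V D_zP_n - P_nD_zP_n\V$ (applying the estimate after noting $P_nD_z$ and $D_zP_n$ agree once compressed, since both $D_z$ and $D_z^*$ satisfy the same near-eigenvector estimate) and bounds this by a constant multiple of $\ve$. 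Taking a sequence $\ve_k\downarrow 0$ with a corresponding increasing sequence $n_k$ and relabeling the projections produces a single QD sequence for $\{D_y : y\in G\}$.

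Combining the two halves: for any finite $\mf\subseteq G$ and $\ve>0$, choose $n$ large enough that Theorem \ref{thm:stefanos} gives $\V[\lambda_{G/Z}(z),P_n]\V<\ve$ for all $z\in\mf$ and simultaneously the diagonal estimate above gives $\V[D_z,P_n]\V<\ve$ for all $z\in\mf$; then $P_n$ is the desired quasidiagonalizing sequence for $\{\lambda_{G/Z}(y), D_y : y\in G\}$, since it also converges pointwise to the identity. By Lemma \ref{lem:reduce}, $\pi_\phi$ is QD. The main obstacle is the diagonal estimate — specifically, making sure that the "locally almost constant" statement of Lemma \ref{lem:diagqd}, which only controls $\psi_z$ on the intersection $yL_n\cap F_n^{-1}K_n$, is exactly matched to the support of the Orfanos vector $\xi_{yL_n}$ (which by \eqref{eq:supp} lies in $F_n^{-1}K_n$), and that property (4) in the choice of $F_n$ is what guarantees the entries of the relevant lifts stay in the range where Lemma \ref{lem:diagqd} applies. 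Everything else is bookkeeping with orthonormal bases and the triangle inequality.
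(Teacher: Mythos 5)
Your proposal is correct and follows essentially the same route as the paper's proof: reduce via Lemma \ref{lem:reduce}, use Orfanos's projections and Theorem \ref{thm:stefanos} for the permutation part, and use Lemma \ref{lem:diagqd} together with the support property \eqref{eq:supp} to show each $\xi_{wL_n}$ is an approximate eigenvector of $D_z$, then extract the commutator estimate and pass to a suitable (sub)sequence of $n$'s. The only cosmetic slip is the claimed identity $\V D_zP_n-P_nD_z\V=\V D_zP_n-P_nD_zP_n\V$; one should instead bound the two corners $(1-P_n)D_zP_n$ and $P_nD_z(1-P_n)$ separately, using orthogonality of the vectors $D_z\xi_{wL_n}-\psi_z(\widetilde{w})\xi_{wL_n}$ and the fact that $D_z^*$ satisfies the same estimate, which is exactly the standard argument you indicate and does not affect correctness.
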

 \begin{proof}  By Lemma \ref{lem:reduce}, it suffices to prove that $\{ \lambda_{G/Z}(y), D_y:y\in G  \}\subseteq B(\ell^2(G/Z))$ is a QD set of operators.
 
Using our specific subsets $K_n$ and $L_n$ of $G/Z$, form the functions $\phi_n:G/Z\rightarrow [0,1]$, vectors $\xi_{yL_n}\in \ell^2(G/Z)$ and finite rank projections $P_n\in B(\ell^2(G/Z))$ as in Section \ref{sec:Orfanos}. By \cite{Orfanos11} (Theorem \ref{thm:stefanos} above), we have 
\begin{equation}
\V \lambda_{G/Z}(y)P_n-P_n\lambda_{G/Z}(y) \V\rightarrow 0 \quad \textrm{as }n\rightarrow\infty\quad\textrm{  for all }y\in G.   \label{eq:ltcom}
\end{equation}
Let $\mf\subseteq G$ be finite and $\ve>0.$  Choose  $n\in\N$ for the pair $(\mf,\ve)$  that satisfies Lemma \ref{lem:diagqd}. Let $z\in\mf.$ Then for each $y\in K_n$, we have
\begin{align*}
\V D_z(\xi_{yL_n})-\psi_z(\widetilde{y})\xi_{yL_n}  \V^2&= \sum_{\alpha\in yL_n}|\phi_n(\alpha)|^2|\psi_z(\widetilde{\alpha})-\psi_z(\widetilde{y})|^2\\
&=\sum_{\alpha\in yL_n\cap F_n^{-1}K_n}|\phi_n(\alpha)|^2||\psi_z(\widetilde{\alpha})-\psi_z(\widetilde{y})|^2 \quad \textrm{by (\ref{eq:supp})}\\
&\leq \ve^2,  
\end{align*}
where the last line follows by Lemma \ref{lem:diagqd}. Notice that  $D_z(\xi_{yL_n})$ is perpendicular to $\xi_{y'L_n}$ when $y\neq y'.$ This observation combined with the above estimates shows that $\V D_zP_n-P_nD_z \V\leq\ve,$ for every $z\in\mf.$  It now follows that there is a subsequence $n_k$  such that 
\begin{equation*}
\lim_{k\rightarrow\infty}\V D_zP_{n_k}-P_{n_k}D_z\V=0\quad \textrm{ for all } \quad z\in G.
\end{equation*}
This coupled with (\ref{eq:ltcom}) proves the Theorem.
\end{proof}

\subsection{Dimension Subgroups} \label{sec:dimsub}
It is well-known that any finitely generated, torsion-free nilpotent group can be embedded into $U_d.$ (Swan has provided a very short proof of this fact \cite{Swan67}).  In  \cite{Jennings55}, Jennings provides an explicit embedding, and  by following through his proof one realizes that the slightest of modifications provides the following 
\begin{lemma}[\textup{\cite{Jennings55}}] \label{lem:correctplace} Let $G$ be a finitely generated, torsion free nilpotent group, and let $a_1,...,a_d$ be free generators of $Z(G).$  Then there are unitriangular groups $U_{k_1},...,U_{k_d}$ and homomorphisms $\rho_i:G\rightarrow U_{k_i}$ such that $\rho_i(a_i)$ generates $Z(U_{k_i})$, $a_i\in \textup{ker}(\rho_j)$ for $i\neq j$ and the direct sum $\rho=\rho_1\times\cdots\times \rho_d$ is faithful on $G.$
\end{lemma}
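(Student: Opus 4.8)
The plan is to deduce the statement from Jennings' explicit embedding, applied not to $G$ itself but to carefully chosen quotients. The first point is that faithfulness of the direct sum comes for free: suppose that for each $i$ we have produced a homomorphism $\rho_i\colon G\to U_{k_i}$ with $\rho_i(a_i)$ a generator of $Z(U_{k_i})$ and $\rho_i(a_j)=e$ for $j\ne i$. Then $\rho=\rho_1\times\cdots\times\rho_d$ is already injective on $Z(G)$, since $\rho(a_1^{n_1}\cdots a_d^{n_d})=e$ forces $\rho_i(a_i)^{n_i}=e$, hence $n_i=0$ as $\rho_i(a_i)$ has infinite order; and because every nontrivial normal subgroup of a nilpotent group meets the center nontrivially, $\ker\rho\cap Z(G)=\{e\}$ forces $\ker\rho=\{e\}$. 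So everything reduces to building, for each fixed $i$, a single $\rho_i$ of the above form.

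To do this I would first replace $G$ by a quotient with infinite cyclic center. Here is the claim I would prove: if $H$ is finitely generated torsion-free nilpotent, $\alpha\in Z(H)\setminus\{e\}$ is primitive (that is, $\la\alpha\ra$ is a direct summand of $Z(H)$), and $C$ is a prescribed complement of $\la\alpha\ra$ in $Z(H)$, then there is a quotient $q\colon H\twoheadrightarrow K$ with $K$ finitely generated torsion-free nilpotent, $C\subseteq\ker q$, and $Z(K)=\la q(\alpha)\ra\cong\Z$. This goes by induction on the Hirsch length of $H$. If $Z(H)=\la\alpha\ra$ the identity works. Otherwise $C$ has positive rank; since the center of a torsion-free nilpotent group is isolated (a standard fact, see \cite{Hall88}), the summand $C$ is an isolated normal subgroup of $H$, the quotient $H/C$ is again finitely generated torsion-free nilpotent with strictly smaller Hirsch length, and a short check — using only that centers of torsion-free nilpotent groups are isolated — shows that the image $\alpha'$ of $\alpha$ in $H/C$ is a nontrivial primitive central element. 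Applying the inductive hypothesis to $H/C$, $\alpha'$ and any complement of $\la\alpha'\ra$ in $Z(H/C)$, and composing, proves the claim. Feeding in $H=G$, $\alpha=a_i$, and $C=\la a_j : j\ne i\ra$ (a complement of $\la a_i\ra$ in $Z(G)\cong\Z^d$) gives a quotient $q_i\colon G\twoheadrightarrow K_i$ with $K_i$ finitely generated torsion-free nilpotent, $q_i(a_j)=e$ for $j\ne i$, and $Z(K_i)=\la\alpha_i\ra\cong\Z$, where $\alpha_i:=q_i(a_i)$.

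It remains to embed $K_i$ into a unitriangular group so that $\alpha_i$ lands in the top-right corner, and this is the "slight modification" of Jennings' argument. Let $c$ be the nilpotency class of $K_i$, let $\Delta$ be the augmentation ideal of $\Z K_i$, and set $R=\Z K_i/\Delta^{c+1}$. Jennings' analysis identifies the integral dimension subgroups of $K_i$ with its isolated lower central series, exhibits $R$ as a free $\Z$-module with basis the ordered monomials in a Mal'cev basis adapted to that filtration, and shows that $g\mapsto\overline{g}$ embeds $K_i$ faithfully into $R^{\times}$. In our situation the top dimension subgroup is $Z(K_i)=\la\alpha_i\ra$, which has rank $1$, so there is a unique Mal'cev basis element of top degree $c$, and one may take it to be $\alpha_i$. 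Ordering the monomial basis of $R$ by \emph{decreasing} degree, with $\overline{\alpha_i-1}$ first and $1\in R$ last, makes the matrices of left multiplication integral and upper unitriangular, giving $\sigma_i\colon K_i\hookrightarrow U_{k_i}$ with $k_i=\mathrm{rank}_\Z R$; and since $(\alpha_i-1)\Delta\subseteq\Delta^{c+1}=0$ in $R$, left multiplication by $\overline{\alpha_i-1}$ kills every positive-degree basis vector and sends $1$ to $\overline{\alpha_i-1}$, i.e.\ it is the matrix unit $e_{1,k_i}$. Thus $\sigma_i(\alpha_i)=1+e_{1,k_i}$ generates $Z(U_{k_i})$, and $\rho_i:=\sigma_i\circ q_i$ has all the required properties.

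The two reductions are soft — the first is a one-line remark about nilpotent groups, and the second is a routine induction on Hirsch length — so the step I expect to require genuine care is the last one: one has to go inside Jennings' construction to see that a prescribed primitive central element can be incorporated into his Mal'cev basis, and that the structural facts being used (the description of $R$, the faithfulness, and above all the existence of an integral basis compatible with the dimension filtration) hold over $\Z$ and not merely over $\mathbb{Q}$. None of this changes Jennings' argument in substance, which is why the lemma is rightly attributed to him.
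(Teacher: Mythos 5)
Your proposal is correct, and its engine is the same as the paper's: pass to a quotient killing the other central generators, let the quotient act by left multiplication on the truncated integral group ring $\Z K/\Delta^{c+1}$ with the Hall--Jennings monomial basis ordered by decreasing weight so that $\alpha_i-1$ acts as the matrix unit $e_{1,k_i}$, and deduce faithfulness of $\rho$ from injectivity on $Z(G)$ plus the fact that nontrivial normal subgroups of nilpotent groups meet the center. The one real structural difference is your preliminary reduction: the paper quotients exactly once, by $N_i=\la a_j : j\ne i\ra$, truncates at the weight $c$ of $b_i$ in $G_i=G/N_i$, and relies on the parenthetical fact that $b_i$ is a primitive (``free'') generator inside the possibly larger center so that $1-b_i$ can serve as a basis element; you instead run an induction on Hirsch length, quotienting by successive complements of $\la\alpha\ra$ in the center (using isolatedness of centers of torsion-free nilpotent groups), to reach a quotient whose center is infinite cyclic and generated by the image of $a_i$. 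Your route costs an extra easy induction but buys a cleaner invocation of Jennings: there the weight of $\alpha_i$ equals the nilpotency class and $\delta_c/\delta_{c+1}\cong\Z$ is generated by the class of $\alpha_i-1$, so the primitivity point the paper glosses does not arise. Both arguments rest on the same literature input, the $\Z$-coefficient Jennings--Hall basis theorem for finitely generated torsion-free nilpotent groups, which you rightly flag as the only step needing care; your ``short checks'' ($C$ isolated in $H$, $\alpha'$ primitive in $Z(H/C)$, and $\delta_c(K_i)=Z(K_i)$ since $\gamma_c(K_i)$ is a nontrivial subgroup of the rank-one isolated center) all go through.
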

\begin{proof} Most of the proof is simply recalling results from \cite{Jennings55} which we shall do for the convenience of the reader.   We follow a mix of P. Hall's lecture notes \cite[Section 7]{Hall88} and Hartley's conference proceedings \cite{Hartley82} for the results on dimension subgroups. We remark that although Hall's discussion focuses on dimension subgroups relative to a field of characteristic zero, everything we use below works equally well for dimension subgroups relative to $\Z$ (see e.g. \cite{Hartley82})

Set $N_1=\la a_2,...,a_d  \ra\trianglelefteq G.$   By \cite{Malcev49} (see also \cite[Theorem 1.2]{Jennings55}), $G/Z(G)$ is torsion free, from which it immediately follows that $G_1:=G/N_1$ is also torsion free. Let $b_1$ be the image of $a_1$ in $G_1.$

Form the group ring $\Z[G_1].$  The \emph{augmentation ideal}, denoted $\Delta$ is the two-sided ideal generated by elements of the form $1-g$ for $g\in G_1.$ For each $n\geq 1$, the \emph{dimension subgroup} of $G_1$ is defined as 
$\delta_n(G_1)=G_1\cap (1+\Delta^n).$ Then $\delta_n(G_1)$ forms a terminating central series \cite{Hartley82}. Let $c$ be such that 
\begin{equation}
b_1\in \delta_c(G_1)\setminus \delta_{c+1}(G_1).
\end{equation}
 Let $\rho_1:G_1\rightarrow End(\Z[G_1]/\Delta^{c+1})$ be defined by left multiplication.  It follows by the discussion preceding  \cite[Theorem 7.5]{Hall88} that (since $b_1$ is a free generator of $Z(G_1)$) there is an ordered basis for the free abelian group $\Z[G_1]/\Delta^{c+1}$, given by 
\begin{equation*}
\beta=\{ 1-b_1, x_{c,1},...,x_{c,k_c},..., x_{1,1},...,x_{1,k_1}, 1  \} \textrm{ mod } \Delta^{c+1}
\end{equation*}
 where $x_{i,j}\in \Delta^i\setminus \Delta^{i+1}$ (notice that the weight of $1-b_1$ equals $c$ in Hall's notation).  
 
 With respect to the basis $\beta$ it is obvious that the matrix of $1-\rho_1(b)$ is upper triangular with 0s along the diagonal for all $b\in G_1$ and that $1-\rho_1(b_1)$ is the rank one operator that maps $1$ to $(1-b_1)$, i.e. that $\rho_1(b_1)$ generates the center of the upper triangular matrices with respect to $\beta.$
 Similarly one defines $N_2,...,N_d$ and representations $\rho_2,...,\rho_d$ as above. Let $\pi_i:G\rightarrow G_i$ be the quotient map and define $\rho=\rho_1\circ\pi_1\times\cdots\times \rho_d\circ\pi_d.$ Then $\rho$ is faithful on $Z(G)$ and since every non-trivial normal subgroup of a nilpotent group must intersect the center non trivially, $\rho$ is faithful on $G$ as well.
\end{proof}
\subsection{Faithful character case for torsion free nilpotent groups with infinite non-central conjugacy classes}
\begin{lemma} \label{thm:sqdnorm} Let $G$ be a  discrete nilpotent group and let $H\leq G$.  Suppose $\phi$ is a trace on $G$, such that $\phi$ restricted to $H$ is a character.  If $\pi_\phi$ is a QD representation of $G$, then $\pi_{\phi|_H}$ is a QD representation of $H.$ 
\end{lemma}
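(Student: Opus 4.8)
The plan is to upgrade the containment of $\pi_{\phi|_H}$ in the restriction $\pi_\phi|_H$ to an \emph{approximate unitary equivalence}; since $\pi_\phi|_H$ is automatically quasidiagonal, Lemma \ref{lem:appuqd} then gives the result. The upgrade will come from Voiculescu's Theorem \ref{thm:voi1}, and the character hypothesis on $\phi|_H$ is precisely what is needed to verify its hypotheses --- a subrepresentation of a quasidiagonal representation need not be quasidiagonal, so some such input is essential.

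First I would set up notation and dispatch the soft points. Write $\mh = L^2(G,\phi)$ with cyclic vector $\delta_e$, and let $\rho := \pi_\phi|_H$, a representation of $C^*(H)$ on $\mh$. Since $\{\pi_\phi(h):h\in H\}$ is a subset of the quasidiagonal operator set $\pi_\phi(C^*(G))$, the representation $\rho$ is quasidiagonal. Because $\phi$ is a trace, $\delta_e$ is a separating vector for $M := \pi_\phi(C^*(G))''$, hence for $N := \rho(C^*(H))'' = \pi_\phi(H)''$, and the vector state of $\delta_e$ restricts to a faithful normal tracial state on $N$. The closed subspace $\mh_0 := \overline{\rho(C^*(H))\delta_e}$ is $H$-invariant, and by uniqueness of the GNS construction (the vector state of $\delta_e$ on $C^*(H)$ is $\phi|_H$) the subrepresentation $\rho|_{\mh_0}$ is unitarily equivalent to $\pi_{\phi|_H}$.

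The key structural step is that $N$ is a finite factor. Compression by the projection onto $\mh_0$, which lies in $N'$, is a normal $*$-homomorphism $N \to B(\mh_0)$ with image $\pi_{\phi|_H}(H)''$; it is injective because $\delta_e \in \mh_0$ is separating for $N$, so $N \cong \pi_{\phi|_H}(H)''$, a factor since $\phi|_H$ is a character, and $N$ is finite as it carries a faithful normal trace. Thus either $N\cong M_n(\C)$ for some finite $n$ --- in which case $\pi_{\phi|_H}$ is finite-dimensional and trivially quasidiagonal --- or $N$ is a $\mathrm{II}_1$ factor, which we now assume. In that case neither $\rho(C^*(H))$ nor $\pi_{\phi|_H}(C^*(H))$ contains a nonzero compact operator: such an operator would produce a nonzero finite-rank spectral projection in a von Neumann algebra isomorphic to $N$, but a nonzero projection in a $\mathrm{II}_1$ factor generates an infinite-dimensional corner. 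Moreover $\ker\rho = \ker\pi_{\phi|_H}$ as ideals of $C^*(H)$: the inclusion $\ker\rho\subseteq\ker\pi_{\phi|_H}$ is clear since $\pi_{\phi|_H}$ is a subrepresentation of $\rho$, and the reverse holds because $\pi_{\phi|_H}$ is, up to unitary equivalence, the composition of $\rho$ with the injective compression $N\to\pi_{\phi|_H}(H)''$. Now Voiculescu's Theorem \ref{thm:voi1} applies to $\rho$ and $\pi_{\phi|_H}$ and gives that $\rho$ is approximately unitarily equivalent to $\pi_{\phi|_H}$; since $\rho$ is quasidiagonal, Lemma \ref{lem:appuqd} finishes the proof.

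The main obstacle is locating the right hypothesis rather than any computation: quasidiagonality does not pass to subrepresentations, and the insight is that $\phi|_H$ being a character forces $N=\pi_\phi(H)''$ to be a factor, which is exactly the ingredient that makes $\pi_\phi|_H$ and its cyclic subrepresentation $\pi_{\phi|_H}$ approximately unitarily equivalent. The remaining verifications --- that $\delta_e$ is separating for $N$, that a $\mathrm{II}_1$ factor contains no nonzero compact operators, and that the two kernels coincide --- are routine.
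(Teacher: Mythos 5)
Your proof is correct, but it takes a genuinely different route from the paper's. The paper argues at the level of C*-ideals: since $\phi$ is a trace, $\ker(\pi_\phi)\cap C^*(H)$ equals the trace ideal $J_H=\{x\in C^*(H):\phi(x^*x)=0\}$, so $C^*(H)/J_H$ embeds into the QD algebra $C^*(G)/\ker(\pi_\phi)$ and is therefore QD; then Theorem \ref{thm:CM} (Howe, Carey--Moran, which uses that $H$ is virtually nilpotent) gives that $J_H$ is \emph{maximal}, and the ``simple $+$ QD $\Rightarrow$ strongly QD'' consequence of Theorem \ref{thm:voi1} yields quasidiagonality of $\pi_{\phi|_H}$. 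You instead work von Neumann algebraically: the restriction $\rho=\pi_\phi|_H$ is QD for trivial reasons, the trace gives a separating GNS vector, the character hypothesis makes $\pi_\phi(H)''\cong\pi_{\phi|_H}(H)''$ a finite factor, and the two-representation form of Voiculescu's theorem (equal kernels, no nonzero compacts in either image, with the type $\mathrm{I}_n$ case handled separately) upgrades the cyclic subrepresentation $\pi_{\phi|_H}$ to an approximate unitary equivalence with $\rho$, so Lemma \ref{lem:appuqd} applies. What your route buys is generality and independence from the group-theoretic input: you never use nilpotency of $G$ or $H$, only that $\phi$ is a trace on $G$ whose restriction to $H$ is an extreme trace, whereas the paper leans on Theorem \ref{thm:CM} (already needed elsewhere) to get maximality of $J_H$ and hence a shorter argument within its existing framework. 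The steps you label routine (the compression onto $\overline{\rho(C^*(H))\delta_e}$ being an injective normal $*$-homomorphism onto $\pi_{\phi|_H}(H)''$, the finite-dimensionality of the GNS space in the $\mathrm{I}_n$ case, the absence of compacts in a $\mathrm{II}_1$ corner) are indeed routine and correctly used.
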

\begin{proof} By assumption $C^*(G)/\textrm{ker}(\pi_\phi)$ is a QD C*-algebra.  Since $\phi$ is a trace we have
\begin{align}
\textrm{ker}(\pi_\phi)\cap C^*(H)&=\{ x\in C^*(G):\phi(x^*x)=0  \}\cap C^*(H)   \notag \\
&=\{ x\in C^*(H):\phi(x^*x)=0  \}=:J_H \label{eq:traceideal}.
\end{align}
Since $\phi$ is a character of $H$, it follows by Theorem \ref{thm:CM} that $J_H$ is a maximal ideal of $C^*(H)$ and by (\ref{eq:traceideal}) it follows that $C^*(H)/J_H$ embeds into $C^*(G)/\textrm{ker}(\pi_\phi).$  Therefore, $C^*(H)/J_H$ is a QD C*-algebra.  Since $C^*(H)/J_H$ is simple, by Voiculescu's theorem $C^*(H)/J_H$ is strongly QD; in particular $\pi_{(\phi|_H)}$ is a QD representation of $H$.
\end{proof}

\begin{theorem} \label{thm:noncconj} Let $G$ be a finitely generated, torsion free nilpotent group such that every non-central conjugacy class is infinite.  Let $\phi$ be a faithful character on $G.$  Then $\pi_\phi$ is a QD representation.
\end{theorem}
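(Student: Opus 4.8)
The plan is to realize $\pi_\phi$, via the dimension-subgroup homomorphisms of Lemma~\ref{lem:correctplace}, as a subrepresentation of a tensor product of representations of the kind handled by Theorem~\ref{thm:QDtrace}. The decisive step --- and the one I expect to be the main obstacle --- is to show that the hypotheses force $\phi$ to vanish off the center: $\phi(g)=0$ for every $g\notin Z(G)$. Here is the structure I would use. Since $\phi$ is a character, $M=\pi_\phi(G)^{\prime\prime}$ is a factor with trace $\tau$ restricting to $\phi$ on $G$; put $G_\phi=\{g\in G:\pi_\phi(g)\in Z(M)\}$, a normal subgroup on which $\pi_\phi$ takes scalar values. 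First, $G_\phi=Z(G)$: the inclusion $\supseteq$ is clear, and if $g\in G_\phi\setminus Z(G)$ then every conjugate $hgh^{-1}$ has the same scalar image $\pi_\phi(g)$, while $\pi_\phi$ is injective on $G$ because $\phi$ is faithful, so the conjugacy class of $g$ would be a singleton, contradicting that non-central classes are infinite. Second, $\phi$ vanishes on $G\setminus G_\phi$: if $g\in Z_2(G)\setminus Z(G)$, pick $h$ with $[g,h]\neq e$; then $[g,h]\in Z(G)$, so $\pi_\phi(h)\pi_\phi(g)\pi_\phi(h)^{-1}=\chi\,\pi_\phi(g)$ for the scalar $\chi=\overline{\phi([g,h])}\neq 1$ (faithfulness), and applying $\tau$ gives $\phi(g)=\chi\,\phi(g)$, hence $\phi(g)=0$. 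For general $g\in Z_c(G)\setminus Z_{c-1}(G)$ one proceeds by induction on $c$: either $[g,G]\subseteq G_\phi$ and the same scalar computation applies, or one fixes $h$ with $[g,h]\notin G_\phi$ and runs a Dixmier-type average of $\pi_\phi$ over the infinite conjugacy class $\{h^jgh^{-j}\}_{j\ge1}$ --- whose members differ from $g$ by elements of $Z_{c-1}(G)$ --- using the inductive hypothesis (that $\phi$ already vanishes on $Z_{c-1}(G)\setminus G_\phi$) to kill the cross-terms and conclude $\phi(g)=0$. This last step is the genuinely delicate point; alternatively it may be extracted from the orbit-theoretic description of characters of nilpotent groups of Howe and of Carey--Moran.

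Granting the vanishing of $\phi$ off $Z(G)$, fix free generators $a_1,\dots,a_d$ of $Z(G)\cong\Z^d$ and apply Lemma~\ref{lem:correctplace}: there are unitriangular groups $U_{k_1},\dots,U_{k_d}$ and homomorphisms $\rho_i\colon G\to U_{k_i}$ with $\rho_i(a_i)$ generating $Z(U_{k_i})$, with $a_j\in\ker\rho_i$ for $j\neq i$, and with $\rho=\rho_1\times\cdots\times\rho_d\colon G\hookrightarrow\widehat G:=U_{k_1}\times\cdots\times U_{k_d}$ faithful. As $\phi$ is multiplicative on $Z(G)$, the character $\phi|_{\langle a_i\rangle}$ transports through the isomorphism $\rho_i\colon\langle a_i\rangle\xrightarrow{\ \sim\ }Z(U_{k_i})$ to a character of $Z(U_{k_i})$; let $\Phi_i$ be the character of $U_{k_i}$ vanishing off $Z(U_{k_i})$ and restricting to it, so $\pi_{\Phi_i}$ is QD by Theorem~\ref{thm:QDtrace}. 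Set $\Phi=\Phi_1\otimes\cdots\otimes\Phi_d$, a character of $\widehat G$ with GNS representation $\pi_\Phi=\pi_{\Phi_1}\otimes\cdots\otimes\pi_{\Phi_d}$, vanishing off $Z(\widehat G)=Z(U_{k_1})\times\cdots\times Z(U_{k_d})$. Then $\Phi\circ\rho=\phi$: for $g=\prod_j a_j^{m_j}\in Z(G)$ one has $\rho_i(g)=\rho_i(a_i)^{m_i}$, so $\Phi(\rho(g))=\prod_i\phi(a_i^{m_i})=\phi(g)$; and for $g\notin Z(G)$ one has $\rho(g)\notin Z(\widehat G)$ (the $\rho(a_i)$ are central in $\widehat G$ and $\rho$ is injective), so $\Phi(\rho(g))=0=\phi(g)$ by the first paragraph. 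Consequently the cyclic vector $\delta_e\in L^2(\widehat G,\Phi)$ implements $\phi$ for the representation $\pi_\Phi\circ\rho$ of $C^*(G)$, and by uniqueness of the GNS construction $\pi_\phi$ is unitarily equivalent to a subrepresentation of $\pi_\Phi\circ\rho$.

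It remains to see that $\pi_\Phi$ is a QD representation of $C^*(\widehat G)$: restricting a QD family of operators along the $*$-homomorphism $C^*(G)\to C^*(\widehat G)$ induced by $\rho$ keeps it QD, and a subrepresentation of a QD representation is QD. Quasidiagonality is not stable under tensor products in general, so one must use the concrete mechanism behind Theorem~\ref{thm:QDtrace}: $\widehat G$ is a direct product of unitriangular groups, so it has an explicit presentation by block-unitriangular integer matrices with center the product of the cyclic centers of the blocks, and the argument of Section~\ref{sec:unitriangular} --- Orfanos's projections for $\widehat G/Z(\widehat G)$ together with the ``locally almost constant weights'' estimate of Lemma~\ref{lem:diagqd} --- carries over to $(\widehat G,\Phi)$ essentially verbatim. (Equivalently, if $P^{(i)}_n$ are the Orfanos projections quasidiagonalizing $\pi_{\Phi_i}$, then the tensor products $\bigotimes_i P^{(i)}_{n_i}$, with the indices $n_i$ chosen independently, exhaust $L^2(\widehat G,\Phi)$ and asymptotically commute with $\pi_\Phi(\widehat G)$ by the telescoping bound $\bigl\|\bigl[\bigotimes_i A_i,\bigotimes_i B_i\bigr]\bigr\|\le\sum_i\|[A_i,B_i]\|$ for unitaries $A_i$ and projections $B_i$.) Hence $\pi_\Phi$ is QD, and therefore so is $\pi_\phi$. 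All the substance is concentrated in the first paragraph: the polynomial growth and partial commutativity of $G$ have to be converted into the hard statement that $\phi$ vanishes off $Z(G)$, and this is precisely where the hypotheses ``faithful'' and ``non-central conjugacy classes infinite'' are both used.
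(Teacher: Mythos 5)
Your overall architecture is the same as the paper's: show $\phi$ vanishes off $Z(G)$, push $\phi$ through the embedding of Lemma \ref{lem:correctplace} into a product $U=U_{k_1}\times\cdots\times U_{k_d}$, quasidiagonalize the tensor product of the centrally supported characters via Theorem \ref{thm:QDtrace}, and pull back. For the first step, your partial argument ($G_\phi=Z(G)$ and the $Z_2$ computation) is fine, but the inductive ``Dixmier-type average'' is exactly the hard content and is not carried out; the paper simply quotes the fact that finitely generated nilpotent groups are centrally inductive (Howe, Carey--Moran), which is also your fallback, so this is acceptable. Your worry about tensor products is unfounded: the telescoping commutator estimate you write down already shows that a tensor product of QD representations is QD, which is the fact the paper uses, so no rerun of Section \ref{sec:unitriangular} for block matrices is needed.

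The genuine gap is the last step: ``a subrepresentation of a QD representation is QD'' is false. A subrepresentation of a $*$-representation is a direct summand, and direct summands of QD representations need not be QD: the single operator $S\oplus S^*$ ($S$ the unilateral shift) is quasidiagonal, hence the identity representation of $A=C^*(S\oplus S^*)$ is a QD representation, yet compressing to the first summand is a representation of $A$ whose image is the Toeplitz algebra in its standard representation, which contains the non-QD operator $S$. So realizing $\pi_\phi$ as the cyclic subrepresentation of $\pi_\Phi\circ\rho$ generated by $\delta_e$ does not by itself yield quasidiagonality of $\pi_\phi$. This is precisely why the paper proves and invokes Lemma \ref{thm:sqdnorm}: since $\Phi$ is a trace on $U$ whose restriction along $\rho$ is the \emph{character} $\phi$, the trace-kernel ideal $J=\{x\in C^*(G):\phi(x^*x)=0\}$ equals $\ker\pi_\phi$ and is a \emph{maximal} ideal by Theorem \ref{thm:CM}; the embedding $C^*(G)/J\hookrightarrow \pi_\Phi(C^*(U))$ shows $C^*(G)/J$ is a QD C*-algebra, and simplicity plus Voiculescu's Theorem \ref{thm:voi1} upgrades this to strong quasidiagonality, so in particular $\pi_\phi$ is QD. (Equivalently, one can check $\ker(\pi_\Phi\circ\rho)=\ker\pi_\phi$ using the trace property and apply Voiculescu to get approximate unitary equivalence.) Note that this means the hypotheses ``character'' and the resulting maximality are used essentially again at this stage, contrary to your closing remark that all the substance sits in the first paragraph.
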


\begin{proof} Since $G$ is finitely generated and nilpotent it is centrally inductive in the sense of \cite{Carey84}, that is any character vanishes on the infinite conjugacy classes of the group modulo the kernel of the representation--see \cite[Proposition 3]{Howe77} and \cite[Section 4]{Carey84}. Therefore $\phi(x)=0$ if $x\not\in Z(G)$ by assumption.
Let $Z(G)\cong \Z^d$ and let $a_1,...,a_d$ be free generators of $Z(G).$ Obtain integers $k_1,...,k_d$ and homomorphisms $\rho_j:G\rightarrow U_{k_j}$ as in Lemma  \ref{lem:correctplace}.   Since $\phi$ is a character, it is multiplicative on $Z(G)$, i.e. there exist real numbers, $\theta_1,...,\theta_d$ such that
\begin{equation*}
\phi\Big( \sum_{j=1}^d \alpha_ja_j  \Big)=\prod_{j=1}^d \exp( i \theta_j\alpha_j).
\end{equation*}
For each $j=1,...,d$ define the character $\psi_j$ on $U_{k_j}$ by $\psi_j(\rho_j(a_j))=\exp(i\theta_j)$ and $\psi_j(x)=0$ if $x\not\in Z(U_{k_j}).$ By Theorem \ref{thm:QDtrace} $\pi_{\psi_j}$ is a QD representation of $U_{k_j}$ for $j=1,...,d.$ Set $U=U_{k_1}\times \cdots\times U_{k_d}$ and define $\psi=\psi_1\otimes \cdots \otimes \psi_d$ on $C^*(U)=C^*(U_{k_1})\otimes \cdots\otimes C^*(U_{k_d}).$ Since the tensor product of QD representations are QD, it follows that $\pi_\psi$ is a QD representation of $U.$  

By Lemma \ref{lem:correctplace}, $\rho=\rho_1\times\cdots\times\rho_d$  is faithful, hence $\psi\circ\rho=\phi.$  Since $\phi$ is a character of $G$, it follows by Lemma \ref{thm:sqdnorm} that $\pi_\phi$ is a QD representation of $G.$
\end{proof}

\section{Reduction to the General Case} \label{sec:RTFC}
 Before we can complete the proof of our main theorem we must show that strong QD is stable under finite extensions. In this section we use the  theory of C*-crossed products and refer the reader to \cite[Section 4.1]{Brown08} for definitions.

\begin{lemma} \label{thm:find} Let $G$ be solvable and $H \leq G$ a normal, virtually nilpotent subgroup of finite index.  If $H$ is strongly QD then so is $G.$
\end{lemma}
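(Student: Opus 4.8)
The plan is to realize $C^*(G)$ as a crossed product of $C^*(H)$ by the finite group $Q = G/H$ and to transfer strong quasidiagonality across this extension using the known structure theory. First I would fix an arbitrary representation $\pi$ of $C^*(G)$ on $\mh$; by Lemma \ref{lem:Voic} (or rather its proof via Theorem \ref{thm:voi2} and Lemma \ref{lem:appuqd}) it suffices to treat an irreducible $\pi$, so we may assume $J = \ker(\pi)$ is a maximal ideal of $C^*(G)$ and we want $C^*(G)/J$ to be quasidiagonal. Restrict $\pi$ to $C^*(H)$. The key point is Mackey-type machinery for finite extensions: the representation $\pi|_{C^*(H)}$ decomposes as a finite direct sum of $G$-conjugates of an irreducible representation $\sigma$ of $C^*(H)$ (the conjugates indexed by a subset of $Q$ acting on $\mathrm{Prim}(H)$), i.e. $\pi|_{C^*(H)}$ is quasi-equivalent to $\bigoplus_{s\in Q/Q_\sigma}{}^s\sigma$. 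Each ${}^s\sigma$ has the same kernel type and, since $H$ is strongly QD, each ${}^s\sigma$ is a quasidiagonal representation of $C^*(H)$; hence $\pi|_{C^*(H)}$ is quasidiagonal as a representation of $C^*(H)$.

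Next I would upgrade quasidiagonality from $C^*(H)$ to $C^*(G)$ inside $B(\mh)$. Because $[G:H]<\infty$, $C^*(G)$ is generated by $\pi(C^*(H))$ together with finitely many unitaries $\pi(g_1),\dots,\pi(g_m)$ (coset representatives of $Q$); moreover $C^*(G) = C^*(H)\rtimes Q$ is a \emph{finitely generated projective}, in fact free, $C^*(H)$-module. The idea is: given a finite set $\mf\subseteq G$ and $\ve>0$, use a quasidiagonalizing projection $P$ for a sufficiently large finite subset of $\pi(C^*(H))$ — large enough to include $\pi(h)$ for all $h$ appearing when the $\pi(g_i)$ are written back in terms of the $H$-module structure — and then average $P$ over the finite group $Q$ (replace $P$ by a spectral projection of $\frac{1}{|Q|}\sum_{s\in Q}\pi(g_s)P\pi(g_s)^*$ near $1$) to obtain a nearby finite-rank projection that nearly commutes with all the $\pi(g_i)$ as well. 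This is the standard trick for passing quasidiagonality through a finite group action; the almost-commutation with $\pi(C^*(H))$ is only mildly degraded by the averaging since $Q$ is finite. Combining the two families of near-commutation estimates shows $\pi(C^*(G))$ is a QD set of operators, so $\pi$ is quasidiagonal.

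A cleaner alternative I would keep in reserve, avoiding explicit Mackey analysis: invoke Theorem \ref{thm:pog} so that $\mathrm{Prim}(G)$ is $T_1$, hence by Lemma \ref{lem:Voic} it is enough to show every primitive quotient $C^*(G)/J$ is quasidiagonal; then observe $J\cap C^*(H)$ is a ($Q$-invariant) finite intersection of maximal ideals of $C^*(H)$, so $C^*(H)/(J\cap C^*(H))$ is a finite direct sum of simple QD C*-algebras and $C^*(G)/J$ is a C*-algebra containing this subalgebra with index $\le |Q|$, whence one argues quasidiagonality is inherited. Either way, the main obstacle — and the reason the hypothesis ``$G$ solvable'' is invoked rather than just ``$G$ virtually nilpotent'' — is controlling the kernel: one needs that $J\cap C^*(H)$ is genuinely an intersection of \emph{maximal} ideals of $C^*(H)$ (equivalently that $C^*(H)/(J\cap C^*(H))$ is a finite direct sum of \emph{simple} algebras), which is where the $T_1$ property of $\mathrm{Prim}(H)$ from Theorem \ref{thm:pog}, Theorem \ref{thm:CM}, and the central-inducibility of the characters involved get used, and it is exactly this step that I would expect the solvability of $G$ to be needed for in order to run the character/trace bookkeeping of Theorem \ref{thm:CM} and Lemma \ref{thm:sqdnorm} uniformly over the finitely many $G$-conjugates.
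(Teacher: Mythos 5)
Your argument has a genuine gap at its central step, the upgrade from $\pi(C^*(H))$ to $\pi(C^*(G))$. (The first reduction is harmless but also not quite right as stated: for non-Type-I $H$ the restriction of an irreducible representation of $G$ to $C^*(H)$ decomposes only into at most $[G:H]$ factor representations, not irreducibles; in any case it is unnecessary, since strong quasidiagonality of $H$ makes $\pi|_{C^*(H)}$ QD with no Mackey analysis.) The averaging trick does not work: the operator $T=\frac{1}{|Q|}\sum_s\pi(g_s)P\pi(g_s)^*$ does almost commute with the finitely many $\pi(g_i)$ and $\pi(h)$, but it is an average of $|Q|$ finite-rank projections, and nothing forces its spectrum to cluster near $\{0,1\}$ --- for that you would need $P$ to almost commute with its conjugates $\pi(g_s)P\pi(g_s)^*$, which does not follow from $P$ almost commuting with finitely many elements of $\pi(C^*(H))$. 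Consequently the spectral projection $\chi_{[1-\delta,1]}(T)$, being obtained from a function that is not continuous near a possible spectral accumulation at the cut, need not inherit any of the almost-commutation estimates (and may even vanish). There is no ``standard trick'' here: whether quasidiagonality survives a crossed product by a finite group is precisely the delicate point, and it is why the paper devotes an entire section to this lemma. Your fallback argument has the same gap in the phrase ``whence one argues quasidiagonality is inherited'': quasidiagonality passes \emph{down} to subalgebras, not up from a finite-index subalgebra to the ambient algebra, so exhibiting a QD subalgebra of $C^*(G)/J$ of finite index proves nothing. You also misplace the role of solvability: it is not needed for any character or kernel bookkeeping ($\mathrm{Prim}(H)$ is $T_1$ simply because $H$ is virtually nilpotent), but only to run an induction on $|G/H|$ through the derived series of the finite solvable quotient, reducing to the case where $G/H$ is cyclic of prime order $p$.

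For comparison, the paper's proof, after that reduction, takes an irreducible $\pi$ (whose kernel is maximal by Theorem~\ref{thm:pog}), uses $C^*(G)\subseteq\ell^\infty(G/H)\rtimes G\cong M_p\otimes C^*(H)$, and extends $\pi$ to an irreducible representation $id_p\otimes\sigma$ of $M_p\otimes C^*(H)$ compressing to $\pi$ on $C^*(G)$. It then analyzes the conjugate representations $\sigma_i=\sigma(x^i\cdot x^{-i})$ of $C^*(H)$: if no two are unitarily equivalent, the support projection $Q$ of $(id_p\otimes\sigma)(\ker\pi)$ lies in $M_p\otimes 1$, and maximality of $\ker\pi$ yields $C^*(G)/\ker\pi\cong(1-Q)(id_p\otimes\sigma)(C^*(G))(1-Q)\subseteq(1-Q)\otimes\sigma(C^*(H))$, a subalgebra of a QD algebra; if two are unitarily equivalent, a genuinely nontrivial algebraic argument (Lemma~\ref{lem:twistJ}) produces an intertwiner inside $\sigma(C^*(H))$, so that after conjugating by a diagonal unitary the same support-projection argument applies. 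Embedding the quotient of $C^*(G)$ \emph{into} a QD algebra, rather than trying to pass QD upward from a subalgebra or through an averaging argument, is the essential idea your proposal is missing.
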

\subsection{Proof of Lemma \ref{thm:find}}
For a representation $\rho$ of a group, we will write $\mh_\rho$ for the associated Hilbert space.
\\\\
We proceed by induction on $|G/H|.$  Since $G$ is solvable, so is $L=G/H.$ In particular the abelianization   $L/[L,L]$ is a product of cyclic groups and has order strictly bigger than 1.  Therefore by our induction hypothesis we may assume that $G/H$ is cyclic of prime order, say $p.$
Let $\pi$ be an irreducible representation of $G.$ We will prove that $\pi$ is QD.  It is necessary to split this into two cases, but before we can even define the two cases we must recall a few basic facts.
\\\\
Let $e, x,x^2,...,x^{p-1}\in G$ be coset representatives of $G/H.$ 
Let $\alpha$ denote the action of $G$ on $\ell^\infty(G/H)$ by left translation.  It is well-known, and easy to prove, that $\ell^\infty(G/H)\rtimes_\alpha G\cong M_p\otimes C^*(H)$ and that under this inclusion $C^*(H)\subseteq  C^*(G)\subseteq M_p\otimes C^*(H)$ we may realize this copy of $C^*(H)$ as the C*-algebra generated by the diagonal matrices $(\lambda_h,\lambda_{xhx^{-1}},...,\lambda_{x^{p-1}hx^{-(p-1)}})$ with $h\in H.$
\\\\
By a well-known construction (see \cite[Theorem 5.5.1]{Murphy90}) there is an irreducible representation $id_p\otimes \sigma$ of $M_p\otimes C^*(H),$ such that $\mh_\pi\subseteq \mh_{id_p\otimes \sigma}$ and if $P:\mh_{id_p\otimes \sigma}\rightarrow \mh_\pi$ is the orthogonal projection, then
\begin{equation}
P(id_p\otimes \sigma(x))P=\pi(x)\quad \textrm{ for all }\quad x\in C^*(G). \label{eq:extendrep}
\end{equation}
For $0\leq i\leq p-1$ define the representation $\sigma_i(h)=\sigma(x^ihx^{-i})$ for $h\in H.$
\newline
We now split the proof into two cases.
\subsubsection{Unitarily Equivalent Case} In this subsection we assume that there exists  $0\leq i<j\leq p-1$ and a unitary $u\in B(\mh_\sigma)$ so 
\begin{equation}
\sigma_i(h)=u\sigma_j(h)u^* \textrm{ for }h\in H. \label{eq:ijue}  
\end{equation}
Replacing $h$ with $x^{-i}hx^i$ in (\ref{eq:ijue}) we have 
\begin{equation}
\sigma_0(h)=\sigma(h)=u\sigma(x^{j-i}hx^{-(j-i)})u^*=u\sigma_{j-i}(h)u^* \quad \textrm{for } h\in H. \label{eq:stuff}
\end{equation}
Since $p$ is prime,  $x^{j-i}H$ is a generator for $G/H.$ Let $0\leq k\leq p-1.$  Then there is an $\ell\geq0$ and an $h_1\in H$ so $h_1x^k=x^{\ell(j-i)}.$ 
By repeated application of (\ref{eq:stuff}) we have for $h\in H$
\begin{align*}
\sigma(h)&= u^{\ell}\sigma(x^{\ell(j-i)}hx^{-\ell(j-i)})u^{*\ell}\\
&=u^{\ell}\sigma(h_1)\sigma(x^khx^{-k})\sigma(h_1^{-1})u^{*\ell}\\
\end{align*}
Therefore all of the $\sigma_i$ are unitarily equivalent to each other.  We summarize these observations as
\begin{lemma} \label{lem:ue} Suppose that there are $0\leq i<j\leq p-1$ such that $\sigma_i$ and $\sigma_j$ are unitarily equivalent. Then for any $0\leq k\leq p-1$ there is a unitary $u\in B(\mh_\sigma)$ such that $\sigma=u^*\sigma_ku$ and for any $0\leq \ell \leq p-1$ there is a unitary $v$ in the group generated by $u$  and $\sigma(H)$ such that $\sigma=v^*\sigma_\ell v.$

\end{lemma}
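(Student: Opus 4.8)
The plan is to carry out, once and for all, the iteration sketched in the passage that establishes (\ref{eq:stuff}), and then to keep track of which group the conjugating unitaries live in. Write $d=j-i$, so $1\le d\le p-1$, and let $w$ denote the unitary of (\ref{eq:ijue}); as shown there, (\ref{eq:stuff}) reads $\sigma(h)=w\,\sigma(x^{d}hx^{-d})\,w^{*}$ for all $h\in H$. Substituting $x^{d}hx^{-d}$ for $h$ and feeding the result back in, a one-line induction on $m\ge 0$ gives
\begin{equation*}
\sigma(h)=w^{m}\,\sigma(x^{md}hx^{-md})\,w^{*m}\qquad(h\in H,\ m\ge 0).
\end{equation*}

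Now fix $k$ with $0\le k\le p-1$; for $k=0$ the first assertion holds with $u=1$, so assume $1\le k\le p-1$. Since $p$ is prime, $d$ generates $\Z/p\Z$, hence there is $m\ge 0$ with $md\equiv k\pmod p$; as $H$ is normal, $h_1:=x^{md}x^{-k}\in H$, so $x^{md}hx^{-md}=h_1(x^{k}hx^{-k})h_1^{-1}$ and therefore $\sigma(x^{md}hx^{-md})=\sigma(h_1)\,\sigma_k(h)\,\sigma(h_1)^{-1}$. Substituting into the previous display,
\begin{equation*}
\sigma(h)=\bigl(w^{m}\sigma(h_1)\bigr)\,\sigma_k(h)\,\bigl(w^{m}\sigma(h_1)\bigr)^{*}\qquad(h\in H),
\end{equation*}
so setting $u:=\bigl(w^{m}\sigma(h_1)\bigr)^{*}$ yields a unitary with $\sigma=u^{*}\sigma_k u$. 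This is the first assertion; in particular all of $\sigma_0,\dots,\sigma_{p-1}$ are mutually unitarily equivalent, and since $\sigma(h_1)\in\sigma(H)$ this $u$ lies in the group generated by $w$ and $\sigma(H)$.

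For the second assertion I would take $k$ to be a generator of $G/H$ (any $k\in\{1,\dots,p-1\}$ works, as $p$ is prime) and keep the unitary $u$ just produced, so that $\sigma_k(h)=\sigma(x^{k}hx^{-k})=u\,\sigma(h)\,u^{*}$ for $h\in H$. Iterating this identity exactly as above gives $\sigma(x^{mk}hx^{-mk})=u^{m}\sigma(h)u^{*m}$ for all $m\ge 0$. Given $\ell$ with $0\le\ell\le p-1$, pick $m$ with $mk\equiv\ell\pmod p$ and put $h_2:=x^{mk}x^{-\ell}\in H$; then $x^{\ell}hx^{-\ell}=h_2^{-1}(x^{mk}hx^{-mk})h_2$, so $\sigma_\ell(h)=\sigma(h_2)^{*}\,\sigma(x^{mk}hx^{-mk})\,\sigma(h_2)=\sigma(h_2)^{*}u^{m}\sigma(h)u^{*m}\sigma(h_2)$, and thus $v:=\sigma(h_2)^{*}u^{m}$ is a unitary in the group generated by $u$ and $\sigma(H)$ with $\sigma=v^{*}\sigma_\ell v$.

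The only genuine work is the combinatorial bookkeeping: keeping the direction of every conjugation straight under iteration, and verifying that the ``correction'' elements $h_1,h_2$ obtained by reducing $md$, respectively $mk$, modulo $p$ really do lie in $H$ — which is precisely where normality of $H$ and primality of $p$ (so that $d$, respectively $k$, generates $\Z/p\Z$) enter. No representation-theoretic or C*-algebraic input is needed; the statement is a purely group-theoretic consequence of the hypothesis (\ref{eq:ijue}).
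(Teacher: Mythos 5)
Your argument is correct and is essentially the paper's own: the lemma merely summarizes the discussion preceding it, which performs exactly your iteration of (\ref{eq:stuff}), uses primality of $p$ so that $x^{j-i}H$ generates $G/H$, and corrects by an element $h_1\in H$ (with $h_1\in H$ because $x^p\in H$, not because of normality per se, but that is a trivial point), yielding the implementing unitary $w^{m}\sigma(h_1)$ in the group generated by the original unitary and $\sigma(H)$. Your second pass, re-iterating from the newly produced $u$ to get each $v\in\langle u,\sigma(H)\rangle$, is the same computation the paper leaves implicit, and it correctly captures the form of the statement actually used later in Lemma \ref{lem:case1}.
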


Now let $J\subseteq C^*(H)$ be the kernel of $\sigma.$ Since all of the $\sigma_i$ are unitarily equivalent to each other they all have the same kernel.  It follows that 
\begin{equation}
\textup{For all } y\in G \textup{ we have }\lambda_yJ\lambda_{y^{-1}}=J. \label{eq:Jinv}
\end{equation}
 Let $J_G$ be the ideal of $C^*(G)$ generated by $J.$  By (\ref{eq:Jinv}) we have
\begin{equation*}
J_G=\Big\{ \sum_{i=0}^{p-1} a_i\lambda_{x^i}: a_i\in J \Big\}.
\end{equation*}
We also have
\begin{equation}
J_G\subseteq \textrm{ker}\Big( id_p\otimes\sigma|_{C^*(G)}  \Big)\subseteq \textrm{ker}(\pi). \label{eq:inclusions}
\end{equation}
The first inclusion follows because $J_G$ is contained in the ideal of $M_p\otimes C^*(H)$ generated by $J$ which is precisely the kernel of $id_p\otimes\sigma$ and the second inclusion is due to (\ref{eq:extendrep}).
\\\\
If $J_G$ happens to be a maximal ideal of $C^*(G)$, then we must have $J_G=\textup{ker}(\pi)$ in which case $\textup{ker}(\pi)=\textrm{ker}\Big( id_p\otimes\sigma|_{C^*(G)}  \Big)$ by (\ref{eq:inclusions}).  Then
 $C^*(G)/\textup{ker}(\pi)\subseteq (M_p\otimes C^*(H))/\textup{ker}(id_p\otimes \sigma)$ thus $C^*(G)/\textup{ker}(\pi)$ is QD because $M_p\otimes C^*(H)$ is strongly QD.
\\\\
Consider now the case when $J_G$ is not maximal. We prove the following:
\begin{lemma} \label{lem:twistJ} There is an $a\in C^*(H)\setminus J$ and an index $0<i\leq p-1$ such that $\lambda_h a-a\lambda_{x^i hx^{-i}}\in J$ for all $h\in H.$  
\end{lemma}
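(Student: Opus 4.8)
The plan is to recognize $C^*(G)/J_G$ as a $\Z/p$ crossed product over the \emph{simple} unital C*-algebra $B:=C^*(H)/J$ and then run the classical minimal--support argument for ideals of crossed products. First note that $\sigma$ is irreducible, since $id_p\otimes\sigma$ is; hence $J=\textrm{ker}(\sigma)$ is a primitive ideal of $C^*(H)$, and because $H$ is virtually nilpotent Theorem \ref{thm:pog} makes $J$ maximal, so $B$ is simple and unital (sharing its unit with $C^*(G)$). Using the module decomposition $C^*(G)=\bigoplus_{i=0}^{p-1}C^*(H)\lambda_{x^i}$ together with the description $J_G=\{\sum_{i}a_i\lambda_{x^i}:a_i\in J\}$ recalled above, one gets $C^*(G)/J_G=\bigoplus_{i=0}^{p-1}B\lambda_{x^i}$, in which every element $z$ has a \emph{unique} expansion $z=\sum_{i=0}^{p-1}\bar a_i\lambda_{x^i}$ with $\bar a_i\in B$; write $\textrm{supp}(z)=\{i:\bar a_i\neq0\}$. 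Here $\beta:=\textrm{Ad}(\lambda_x)$ descends to an automorphism of $B$ (it preserves $J$ by \eqref{eq:Jinv}), one has $\lambda_{x^i}\bar b=\beta^i(\bar b)\lambda_{x^i}$ for $\bar b\in B$, and $\lambda_{x^p}$ is a $\beta$-fixed unitary of $B$; consequently right multiplication by a suitable power of $\lambda_x$ cyclically permutes the set $\textrm{supp}(z)$ without destroying any coefficient.

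Since $J_G$ is assumed not maximal, fix a proper ideal $\mathcal{J}$ of $C^*(G)$ with $J_G\subsetneq\mathcal{J}$ and set $\mathcal{I}=\mathcal{J}/J_G$, a nonzero proper ideal of $C^*(G)/J_G$. As $B$ is simple, unital, and shares its unit with $C^*(G)/J_G$, a proper ideal cannot meet $B$ nontrivially, so $\mathcal{I}\cap B=0$; in particular no nonzero element of $\mathcal{I}$ has one--element support (multiply such an element by a power of $\lambda_x$ to land in $\mathcal{I}\cap B$). Now pick a nonzero $z\in\mathcal{I}$ whose support has the smallest possible cardinality; after multiplying by a power of $\lambda_x$ we may assume $0\in\textrm{supp}(z)$, i.e.\ $\bar a_0\neq0$, and by the previous sentence $|\textrm{supp}(z)|\geq2$. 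Using simplicity of $B$ choose $c_j,d_j\in B$ with $\sum_jc_j\bar a_0d_j=1$ and replace $z$ by $\sum_jc_jzd_j$: this is again a minimal--support element of $\mathcal{I}$, now normalized so that $\bar a_0=1$.

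The punchline: for every $b\in B$ the element $bz-zb$ lies in $\mathcal{I}$ and, since its $0$-th coefficient is $b\cdot1-1\cdot b=0$, has support contained in $\textrm{supp}(z)\setminus\{0\}$, hence strictly smaller; minimality forces $bz-zb=0$. Expanding, $b\bar a_i=\bar a_i\beta^i(b)$ for all $b\in B$ and all $i\in\textrm{supp}(z)$. Choose $i_0\in\textrm{supp}(z)$ with $i_0\neq0$ (available because $|\textrm{supp}(z)|\geq2$), so $1\leq i_0\leq p-1$ and $\bar a_{i_0}\neq0$; taking $b$ to be the image of $\lambda_h$ in $B$ gives $\overline{\lambda_h}\,\bar a_{i_0}=\bar a_{i_0}\,\overline{\lambda_{x^{i_0}hx^{-i_0}}}$ for every $h\in H$. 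Lifting $\bar a_{i_0}$ to any $a\in C^*(H)\setminus J$ then yields $\lambda_ha-a\lambda_{x^{i_0}hx^{-i_0}}\in J$ for all $h\in H$, which is the claim with $i=i_0$.

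I expect the main difficulty to be purely organizational: pinning down the identification $C^*(G)/J_G\cong\bigoplus_{i=0}^{p-1}B\lambda_{x^i}$ with its unique coefficients (so that ``support'' and the minimal--support induction are legitimate) and verifying that multiplication by powers of $\lambda_x$ permutes supports cyclically without annihilating coefficients. The one genuinely indispensable input is the simplicity of $B$, which rests on the $T_1$ property of $\textrm{Prim}(C^*(H))$ from Theorem \ref{thm:pog}; everything after that is the classical Effros--Hahn / Zeller-Meier manipulation.
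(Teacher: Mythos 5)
Your proposal is correct, and it proves the lemma by a cleaner organization of the same underlying mechanism rather than by a genuinely different idea. Both arguments rest on identical inputs: the unique decomposition $z=\sum_{i=0}^{p-1}a_i\lambda_{x^i}$, the description of $J_G$ (which, like your observation that $\mathrm{Ad}(\lambda_x)$ preserves $J$, uses (\ref{eq:Jinv}) and hence the standing hypothesis of the unitarily equivalent case), and the maximality of $J=\ker(\sigma)$ coming from irreducibility of $\sigma$ together with Theorem \ref{thm:pog}. The difference is bookkeeping: the paper stays inside $C^*(G)$, takes $z\in\ker(\pi)\setminus J_G$, and runs an explicit terminating algorithm (normalize $a_0$ to $1$ modulo $J$ using maximality, take commutators with $\lambda_h$ to strictly shrink the support, stop either when step (v) fires or at support two), whereas you pass to $C^*(G)/J_G$, where $B=C^*(H)/J$ is simple, choose a minimal-support element of a proper ideal strictly containing $J_G$, normalize its zeroth coefficient to $1$, and let minimality force $bz=zb$ for all $b\in B$, giving the intertwining relation for every nonzero coefficient simultaneously. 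Your version buys brevity, eliminates the case analysis and termination check (it is indeed the standard Zeller-Meier/Effros--Hahn-type manipulation), and yields the slightly stronger statement that the relation holds for all $b\in B$ rather than only for group elements; the paper's version keeps everything concrete inside $\ker(\pi)$ and avoids setting up the quotient picture. If you write yours up, the points to make explicit are exactly the ones you flagged plus two small ones: $J_G\cap C^*(H)=J$ (so $B$ embeds unitally in $C^*(G)/J_G$ and coefficients in the quotient are unique), and $J_G$ is proper because $J_G\subseteq\ker(\pi)$ by (\ref{eq:inclusions}), so a proper ideal strictly containing it exists once it is not maximal.
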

\begin{proof} Let $x\in C^*(G).$  It follows from the definition of group C*-algebras that there are unique $a_0,...,a_{p-1}\in C^*(H)$ such that
$x=\sum_{i=0}^{p-1} a_i\lambda_{x^i}.$
In particular, for all $x\in C^*(G)$ the following is well-defined: 
\begin{equation*}
\textup{supp}(x)=\{ i\in \{ 0,...,p-1  \}: a_i\neq0   \}.
\end{equation*}

Let now $z=\sum_{i=0}^{p-1} a_i\lambda_{x^i}\in \textup{ker}(\pi)\setminus J_G$ with $a_i\in C^*(H).$ Since $z\not\in J_G$, there is some index $i$ so $a_i\not\in J.$ By the maximality of $J$ in $C^*(H)$ we have $|\textup{supp}(z)|\geq 2.$  
\\\\
We transform $z$ via the following algorithm so one of its coefficients satisfy the lemma.
\begin{enumerate}
\item[(i)] If $|\textup{supp}(z)|=2$ go to the final step. If not go to the next step.
\item[(ii)] If any of the nonzero $a_i$ are in $J$, replace $z$ with $z-a_i\lambda_{x_i}$. Repeat until we either have $a_i=0$ or $a_i\not\in J$ for all $0\leq i\leq p-1.$ Notice that $|\textup{supp}(z)|$ cannot increase under this operation. If $|\textup{supp}(z)|=2$ go to the last step, if not continue with the next step.
\item[(iii)] If $a_0=0$, replace $z$ with $z\lambda_{x^{-i}}$ where $a_i\neq0$ and we now have $a_0\not\in J.$  Notice that $|\textup{supp}(z)|$ remains constant under this operation.
\item[(iv)] Since $a_0\not\in J$ and $J$ is maximal in $C^*(H)$ there are $\alpha_1,....,\alpha_t,\beta_1,...,\beta_t\in C^*(H)$ and $\gamma\in J$ so $\sum \alpha_i a_0 \beta_i=1+\gamma.$  Replace $z$ with
$\sum \alpha_i z\beta_i-\gamma$ and note that $a_0=1$. Notice that $|\textup{supp}(z)|$ cannot increase under this operation. If $|\textup{supp}(z)|=2$, go to the last step, if not continue with the next step.
\item[(v)] If there is an $i>0$ such that $a_i\in C^*(H)\setminus J$ and  $\lambda_ha_i-a_i\lambda_{x^ihx^{-i}}\in J$ for every $h\in H,$ then terminate the algorithm by taking $a=a_i$ for any such $i.$
\\\\
If not, then there is an $i>0$ with $a_i\in C^*(H)\setminus J$ and an $h\in H$ such that $\lambda_ha_i-a_i\lambda_{x^ihx^{-i}}\not\in J.$ Then replace $z$ with $\lambda_h z-z\lambda_h.$
Since $a_0$ was equal to 1 it follows that $|\textup{supp}(z)|$ has been reduced by at least 1 and $z\in \textup{ker}(\pi)\setminus J_G.$  If $|\textup{supp}(z)|>2$ repeat the algorithm.  If $|\textup{supp}(z)|=2$ go to the next step.
\item[(vi)] After translating and using the maximality of $J$ as in steps (3) and (4) we have  $z=1+a\lambda_{x^i}$ for some $0< i\leq p-1$ and $a\in C^*(H)\setminus J.$  Then for every $h\in H$ we have $(\lambda_h z-z\lambda_h)\lambda_{x^{-i}}=(\lambda_ha-a\lambda_{x^ihx^{-i}})\in \textup{ker}(\pi) \cap C^*(H)= J.$
\end{enumerate}

\end{proof} 
\begin{lemma} \label{lem:case1} Suppose that any two of the representations $\{  \sigma,...,\sigma_{p-1} \}$ are unitarily equivalent. Then $\pi$ is a QD representation.
\end{lemma}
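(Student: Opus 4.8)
My plan is to split on whether $J_G$ is a maximal ideal of $C^*(G)$, the dichotomy already set up before Lemma \ref{lem:twistJ}. The maximal case was disposed of in the paragraph preceding that lemma: there $\ker(\pi)=J_G$, so $C^*(G)/\ker(\pi)$ embeds into the quotient $(M_p\otimes C^*(H))/\ker(id_p\otimes\sigma)$ of the strongly QD algebra $M_p\otimes C^*(H)$ (which is strongly QD because $C^*(H)$ is and a representation of $M_p\otimes B$ is an amplification of one of $B$); hence $C^*(G)/\ker(\pi)$ is QD, and being simple and unital it is strongly QD by Theorem \ref{thm:voi1}, so the irreducible representation $\pi$ is QD. Thus I only need to treat the case $J_G\subsetneq\ker(\pi)$, using Lemma \ref{lem:twistJ}. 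Note first that by Lemma \ref{lem:ue} all of $\sigma_0,\dots,\sigma_{p-1}$ are mutually unitarily equivalent in the present situation, so $J=\ker(\sigma_i)$ for every $i$ and (\ref{eq:Jinv}) holds.

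Assume $J_G\subsetneq\ker(\pi)$. The algorithm in the proof of Lemma \ref{lem:twistJ} terminates with an element $z=1+a\lambda_{x^i}$ that lies in $\ker(\pi)$, where $a\in C^*(H)\setminus J$ and $0<i\leq p-1$ (throughout that algorithm $z$ is kept in $\ker(\pi)\setminus J_G$, and step (vi) records its final form). Applying $\pi$ gives $\pi(a)\pi(\lambda_{x^i})=-1$, and since $\pi(\lambda_{x^i})$ is unitary this forces $\pi(a)$ to be invertible, with $\pi(\lambda_{x^i})=-\pi(a)^{*}\in\pi(C^*(H))$.

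Next I use that $p$ is prime: since $0<i\leq p-1$, the coset $x^iH$ generates $G/H$, so for each $0\leq k\leq p-1$ there are $\ell\geq0$ and $h\in H$ with $x^k=h\,x^{i\ell}$, whence $\pi(\lambda_{x^k})=\pi(\lambda_h)\pi(\lambda_{x^i})^{\ell}\in\pi(C^*(H))$. As $C^*(G)$ is the closed span of $\{a'\lambda_{x^k}:a'\in C^*(H),\ 0\leq k\leq p-1\}$, this yields $\pi(C^*(G))=\pi(C^*(H))$. On the other hand $J\subseteq J_G\subseteq\ker(\pi)$, so $\ker(\pi)\cap C^*(H)$ is a proper ideal of $C^*(H)$ containing the maximal ideal $J$, hence equals $J$; therefore $C^*(G)/\ker(\pi)=\pi(C^*(H))\cong C^*(H)/J$. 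Finally $C^*(H)/J\cong\sigma(C^*(H))$ is the image of the representation $\sigma$ of $C^*(H)$, and $\sigma$ is QD because $H$ is strongly QD; thus $C^*(H)/J$ is a simple, unital, QD C*-algebra, hence strongly QD by Theorem \ref{thm:voi1}, and since $\pi$ factors through $C^*(G)/\ker(\pi)\cong C^*(H)/J$ it is QD.

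The one step carrying real content is in the second paragraph: extracting from the algorithm of Lemma \ref{lem:twistJ} that the element $z=1+a\lambda_{x^i}$ actually belongs to $\ker(\pi)$, which is what makes $\pi(\lambda_{x^i})$ available inside $\pi(C^*(H))$ and so collapses $C^*(G)/\ker(\pi)$ onto the simple quotient $C^*(H)/J$. The remaining steps are routine manipulation of ideals together with two standard consequences of Voiculescu's theorem (a simple unital QD C*-algebra is strongly QD, and an amplification of a QD representation is QD).
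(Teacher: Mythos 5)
Your maximal case reproduces the paper's preliminary paragraph, and in the branch where the algorithm of Lemma \ref{lem:twistJ} really does reach step (vi) your shortcut is valid and in fact slicker than the paper's compression argument: from $1+a\lambda_{x^i}\in\ker(\pi)$ you correctly get $\pi(\lambda_{x^i})=-\pi(a^*)\in\pi(C^*(H))$, hence $\pi(C^*(G))=\pi(C^*(H))\cong C^*(H)/J$, and quasidiagonality follows. The gap is exactly at the step you flag as carrying the real content: the algorithm does \emph{not} always terminate at step (vi). Step (v) has an earlier exit: as soon as one coefficient $a_i\in C^*(H)\setminus J$ of the current $z$ satisfies $\lambda_h a_i-a_i\lambda_{x^ihx^{-i}}\in J$ for all $h\in H$, the algorithm stops and outputs only that coefficient, while $z$ may still have support of size greater than $2$; no element of the form $1+a\lambda_{x^i}$ in $\ker(\pi)$ is produced, so your parenthetical ``step (vi) records its final form'' is not what the algorithm guarantees. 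In that branch your second paragraph has nothing to work with.

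Nor can the early-exit data be used to patch the argument cheaply. The relation $\lambda_h a-a\lambda_{x^ihx^{-i}}\in J\subseteq\ker(\pi)$ only says that $\pi(a)\pi(\lambda_{x^i})$ commutes with every $\pi(\lambda_h)$, $h\in H$; since $\pi|_{C^*(H)}$ need not be irreducible, this commutant need not be scalar, so you cannot conclude $\pi(\lambda_{x^i})\in\pi(C^*(H))$ this way. This is precisely why the paper runs the argument at the level of the irreducible representation $\sigma$ of $C^*(H)$, where $\sigma(H)'=\C$ forces $\sigma(a)u^*$ to be scalar, puts the intertwining unitary inside $\sigma(C^*(H))$, and then finishes by conjugating with the diagonal unitary $W$ and compressing by the support projection $Q\in M_p\otimes 1$. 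The fact you need -- that some $1+a\lambda_{x^i}$ lies in $\ker(\pi)$, equivalently $\pi(\lambda_{x^i})\in\pi(C^*(H))$ -- is true in the unitarily equivalent case, but establishing it amounts to showing that the automorphism $h\mapsto x^ihx^{-i}$ is implemented by a unitary in $\sigma(C^*(H))$, i.e., to the paper's argument. So as written the non-maximal case is incomplete: you must either treat the step-(v) exit by the paper's method or prove separately that the algorithm can always be driven to step (vi).
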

\begin{proof} We first show that for each $0\leq k\leq p-1$ there is a unitary $u\in \sigma(C^*(H))$ so $u^*\sigma u=\sigma_k .$

 By Lemma \ref{lem:ue} is suffices to find \emph{any} $i>0$ and a $u\in C^*(H)$ so $u^*\sigma u=\sigma_i .$  To this end, obtain $a$ and $i$ as in Lemma \ref{lem:twistJ}.  By Lemma \ref{lem:ue} there is a $u\in B(\mh_\sigma)$ so $u^*\sigma u=\sigma_i .$ Then for all $h\in H$ we have 
\begin{equation*}
\sigma(h)\sigma(a)=\sigma(a)\sigma(x^i hx^{-i})=\sigma(a)u^*\sigma(h)u.
\end{equation*}
Then $\sigma(a)u^*\in \sigma(H)'=\mathbb{C}.$  Since  $\sigma(a)\neq 0$ we have $u=\sigma(a)\V \sigma(a)  \V^{-1}\in \sigma(C^*(H)).$ 

We now have unitaries $u_1,...,u_{p-1}\in \sigma(C^*(H))$ such that $u_i^*\sigma u_i=\sigma_i$ for $i=1,...,p-1.$ Define the diagonal unitary
$W=(1,u_1,...,u_{p-1})\in M_p\otimes \sigma(C^*(H)).$  Then
\begin{equation*}
W((\sigma(h),...,\sigma(x^{p-1}hx^{-(p-1)}))W^*=1_p\otimes \sigma(h) \quad \textrm{ for all }h\in H.
\end{equation*} 
Since $W\in M_p\otimes \sigma(C^*(H))$, after conjugating by $W$ we may assume that $(id_p\otimes \sigma)(C^*(G))$ contains a copy of $1_p\otimes \sigma(C^*(H)).$

Let $Q$ be the support projection of $(id_p\otimes \sigma )(\textup{ker}(\pi)).$  Then
\begin{equation*}
Q\in (id_p\otimes \sigma )(C^*(G))'\subseteq [1_p\otimes \sigma(C^*(H))]'=M_p\otimes 1\cong M_p.
\end{equation*}
Since $\textup{ker}(\pi)$ is maximal we have 
\begin{equation*}
C^*(G)/\textup{ker}(\pi)\cong (1-Q)  (id_p\otimes \sigma )(C^*(G))   (1-Q)\subseteq (1-Q)\otimes \sigma(C^*(H)).
\end{equation*}
Since $\sigma(C^*(H))$ is QD, so is $(1-Q)\otimes \sigma(C^*(H)).$  Since subalgebras of QD C*-algebras are QD it follows that  $C^*(G)/\textup{ker}(\pi)$ is also QD.
\end{proof}
\subsubsection{Non unitarily equivalent case}
In this case we assume that none of the $\sigma_i$ are unitarily equivalent.  
\begin{lemma}   If none of the $\sigma_i$ are unitarily equivalent, then $\pi$ is a QD representation.
\end{lemma}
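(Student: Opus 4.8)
The plan is to show that, under the standing hypothesis that the $\sigma_i$ are pairwise inequivalent, the irreducible representation $id_p\otimes\sigma$ of $M_p\otimes C^*(H)$ remains irreducible when restricted to $C^*(G)$. Granting this, $I:=\textrm{ker}((id_p\otimes\sigma)|_{C^*(G)})$ is a primitive ideal of $C^*(G)$, hence maximal by Theorem \ref{thm:pog} (note $G$ is virtually nilpotent, having the finite index virtually nilpotent subgroup $H$). By (\ref{eq:extendrep}), $I\subseteq\textrm{ker}(\pi)$, and since $\textrm{ker}(\pi)$ is proper, $I=\textrm{ker}(\pi)$. Therefore $C^*(G)/\textrm{ker}(\pi)\cong(id_p\otimes\sigma)(C^*(G))$ is a C*-subalgebra of $M_p\otimes\sigma(C^*(H))$. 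Because $H$ is strongly QD, $\sigma(C^*(H))$ is QD, and since finite matrix amplifications and C*-subalgebras of QD C*-algebras are again QD, $C^*(G)/\textrm{ker}(\pi)$ is QD. Being simple, it is strongly QD by Theorem \ref{thm:voi1}, so the representation of $C^*(G)/\textrm{ker}(\pi)$ induced by $\pi$ is QD; hence $\pi$ is QD. This parallels the proof of Lemma \ref{lem:Voic}.

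To prove the irreducibility claim I would work on $\mh_{id_p\otimes\sigma}=\C^p\otimes\mh_\sigma=\bigoplus_{i=0}^{p-1}\mh_\sigma$ and compute the commutant of the generating set $\{(id_p\otimes\sigma)(\lambda_h):h\in H\}\cup\{(id_p\otimes\sigma)(\lambda_x)\}$ of $(id_p\otimes\sigma)(C^*(G))$. Each $\sigma_i=\sigma\circ\textrm{Ad}(x^i)|_H$ is $\sigma$ composed with an automorphism of $H$, hence irreducible, and by hypothesis the $\sigma_i$ are pairwise inequivalent. Using the description of the copy of $C^*(H)$ inside $C^*(G)$ recalled above, $(id_p\otimes\sigma)(\lambda_h)=\textrm{diag}(\sigma_0(h),\dots,\sigma_{p-1}(h))$, so Schur's lemma identifies the commutant of $\{(id_p\otimes\sigma)(\lambda_h):h\in H\}$ with the algebra of diagonal scalar matrices $\textrm{diag}(c_0,\dots,c_{p-1})$.

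It then remains to analyze $V:=(id_p\otimes\sigma)(\lambda_x)$. Conjugation by $V$ sends $(id_p\otimes\sigma)(\lambda_h)$ to $(id_p\otimes\sigma)(\lambda_{xhx^{-1}})=\textrm{diag}(\sigma_1(h),\dots,\sigma_{p-1}(h),\sigma(\lambda_{x^phx^{-p}}))$, whose last entry equals $\sigma(\lambda_{x^p})\sigma_0(h)\sigma(\lambda_{x^p})^*$ since $x^p\in H$. Comparing the pairwise inequivalent irreducible summands on the two sides via Schur's lemma, $V$ must map the $i$-th copy of $\mh_\sigma$ onto the $(i-1)$-th (indices modulo $p$); consequently conjugation by $V$ permutes the diagonal scalar matrices by the single $p$-cycle $\textrm{diag}(c_0,\dots,c_{p-1})\mapsto\textrm{diag}(c_1,\dots,c_{p-1},c_0)$. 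A diagonal scalar matrix fixed by this $p$-cycle has all entries equal, so the commutant of $(id_p\otimes\sigma)(C^*(G))$ is $\C$ and the restriction is irreducible.

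I expect the last commutant computation to be the only real point: one must verify that $(id_p\otimes\sigma)(\lambda_x)$ cyclically permutes the $\sigma_i$-isotypic components. This is standard Mackey-machine behavior, but carrying it out requires keeping careful track of the crossed-product identification $\ell^\infty(G/H)\rtimes_\alpha G\cong M_p\otimes C^*(H)$ and of the unitary twist by $\sigma(\lambda_{x^p})$ arising from $x^p\in H$. Everything before and after that step is formal.
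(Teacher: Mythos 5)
Your argument is correct, but it takes a different route through the middle of the proof than the paper does. You establish that $(id_p\otimes\sigma)|_{C^*(G)}$ is irreducible by the Mackey-type analysis: Schur's lemma forces the commutant of the diagonal copy $\{(id_p\otimes\sigma)(\lambda_h):h\in H\}$ to be the scalar diagonal algebra $\ell^\infty(p)\otimes 1$ (using that the $\sigma_i$ are irreducible and pairwise inequivalent), and then the block decomposition of $V=(id_p\otimes\sigma)(\lambda_x)$ — whose only nonzero blocks are intertwiners between inequivalent irreducibles except along the shifted diagonal, with the $\sigma(\lambda_{x^p})$ twist handling the wrap-around — shows conjugation by $V$ acts on that diagonal algebra by a $p$-cycle, so the commutant of the image of $C^*(G)$ is trivial. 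From irreducibility you get that $I=\mathrm{ker}((id_p\otimes\sigma)|_{C^*(G)})$ is primitive, hence maximal by Theorem \ref{thm:pog}, hence equal to $\mathrm{ker}(\pi)$ via (\ref{eq:extendrep}), and the quotient embeds in $M_p\otimes\sigma(C^*(H))$, giving QD. The paper never proves irreducibility and never analyzes $\lambda_x$ at all: it takes the support projection $Q$ of $(id_p\otimes\sigma)(\mathrm{ker}(\pi))$, observes $Q$ lies in $(id_p\otimes\sigma)(C^*(G))'\subseteq(id_p\otimes\sigma)(C^*(H))'=\ell^\infty(p)\otimes 1$ (citing Pedersen for exactly the disjointness/Schur computation you did by hand), and then, using maximality of $\mathrm{ker}(\pi)$, identifies $C^*(G)/\mathrm{ker}(\pi)$ with the corner $(1-Q)(id_p\otimes\sigma)(C^*(G))(1-Q)$ inside a corner of $M_p\otimes\sigma(C^*(H))$, finishing exactly as in Lemma \ref{lem:case1}. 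So both proofs hinge on the same commutant fact and the same final embedding into $M_p\otimes\sigma(C^*(H))$ plus Theorem \ref{thm:voi1}; yours costs the extra block-permutation bookkeeping but yields the sharper conclusion that the restriction is irreducible with kernel exactly $\mathrm{ker}(\pi)$ (so $Q=0$), while the paper's support-projection compression is shorter, tolerates any a priori position of $(id_p\otimes\sigma)(\mathrm{ker}(\pi))$, and lets the two cases of the dichotomy share one ending.
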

\begin{proof} 
Let $Q$ be the support projection of $(id_p\otimes \sigma )(\textup{ker}(\pi)).$  Since none of the $\sigma_i$ are unitarily equivalent, by \cite[Theorem 3.8.11]{Pedersen79} we have
\begin{equation*}
Q\in (id_p\otimes \sigma )(C^*(G))'\subseteq (id_p\otimes \sigma )(C^*(H))'=\ell^\infty(p)\otimes 1\subseteq M_p\otimes 1.
\end{equation*}
We now complete the proof as in Lemma \ref{lem:case1}.
\end{proof}
\subsection{Proof of Theorem \ref{thm:fgnil}} 
We prove the main theorem via induction on the Hirsch number of a nilpotent group.  Let us recall the relevant results, all of which are classic and can be found in \cite[Chapter 1]{Segal83}. 

A group $G$ is \emph{polycyclic} if  it contains a series $\{ e \}=G_{n+1}\trianglelefteq G_n\trianglelefteq \cdots G_2\trianglelefteq G_1=G$ such that $G_i/G_{i+1}$ is a cyclic group.  The \emph{Hirsch number} of $G$ is the number of times that $G_i/G_{i+1}$ is infinite, and we write $h(G)$ for the Hirsch number.  The Hirsch number is an invariant of a polycyclic group. All finitely generated nilpotent groups are polycyclic and we have for a normal subgroup $N$ of $G$ that
$h(G)=h(N)+h(G/N).$  We shall use these facts without reference in the following
\begin{thm16} Let $G$ be a finitely generated,  solvable virtually nilpotent group.  Then $C^*(G)$ is strongly quasidiagonal.
\end{thm16}
\begin{proof} By Lemma \ref{thm:find} it suffices to prove that $C^*(G)$ is strongly QD for all finitely generated nilpotent groups.

We proceed by induction on $h(G).$ The case $h(G)=0$ is trivial so we assume it holds when $h(G)<d$ and prove it for $h(G)=d.$  It is well-known (see \cite{Hall88}) that $G$ has a finite index, torsion free subgroup (and this doesn't change the Hirsch number). Therefore we may assume $G$ is torsion free by Lemma \ref{thm:find}.  

It follows by a standard argument that $G$ has a finite index subgroup $N$ such that every non-central conjugacy class of $N$ is infinite.  Indeed, let $F(G)$ denote the subgroup of $G$ consisting of those elements with finite conjugacy classes.  Since $G$ is finitely generated and nilpotent, so is $F(G).$  Therefore, by \cite[Lemma 3]{Baer48}, we have that $Z(G)$ has finite index in $F(G).$ Let $x_1,...,x_k$ be coset representatives for $F(G)$ in $Z(G).$  Letting $G$ act by conjugation on the conjugacy classes of the $x_i$ and taking the kernel $N$ of this action, we see that $N$ has finite index in $G$ and  every non-central conjugacy class of $N$ is infinite. Therefore we may again by Lemma \ref{thm:find} assume that every non-central conjugacy class of $G$ is infinite (and again leave the Hirsch number unchanged).

Let now $\phi$ be a character of $G.$ If $\phi$ is faithful, then $\pi_\phi$ is QD by Theorem \ref{thm:noncconj}.  
If $\{ e\}\neq k(\phi)=\{ g\in G:\phi(g)=1 \}$, then $h(k(\phi))\geq1$, since $G$ is torsion free.  Hence $h(G/k(\phi))<d$ and clearly $\phi$ induces a trace, $\widehat{\phi}$ on $G/k(\phi)$ satisfying $\widehat{\phi}\circ q=\phi$ where $q:G\rightarrow G/k(\phi)$ is the quotient map.  By our induction hypothesis,  $\pi_{\widehat{\phi}}$ is a QD representation of $G/k(\phi)$ and by uniqueness of GNS representations, $\pi_{\widehat{\phi}}\circ q$ is unitarily equivalent to $\pi_\phi.$  By Theorem  \ref{thm:CM}, it follows that every primitive quotient of $C^*(G)$ is QD.  By Theorem \ref{thm:pog}, $C^*(G)$ has a $T_1$ primitive ideal space.  Finally $C^*(G)$ is strongly QD by Lemma \ref{lem:Voic}.

\end{proof}

\section*{Note added in proof} 
Regarding Question \ref{ques:fgsqd},  the author has recently shown in \cite{Eckhardt13b} that there are polycyclic (hence finitely generated), non virtually nilpotent, strongly quasidiagonal groups. 

\section*{Acknowledgement}
An earlier version of this paper contained an error in the proof of Lemma 3.1.  We would like to thank Mikael R\o rdam for kindly pointing this out to us.


\begin{thebibliography}{10}

\bibitem{Baer48}
Reinhold Baer.
\newblock Finiteness properties of groups.
\newblock {\em Duke Math. J.}, 15:1021--1032, 1948.

\bibitem{Brown84}
L.~G. Brown.
\newblock The universal coefficient theorem for {${\rm Ext}$} and
  quasidiagonality.
\newblock In {\em Operator algebras and group representations, Vol. I (Neptun,
  1980)}, volume~17 of {\em Monogr. Stud. Math.}, pages 60--64. Pitman, Boston,
  MA, 1984.

\bibitem{Brown96}
Lawrence~G. Brown and Marius Dadarlat.
\newblock Extensions of {$C^\ast$}-algebras and quasidiagonality.
\newblock {\em J. London Math. Soc. (2)}, 53(3):582--600, 1996.

\bibitem{Brown08}
Nathanial~P. Brown and Narutaka Ozawa.
\newblock {\em {$C^*$}-algebras and finite-dimensional approximations},
  volume~88 of {\em Graduate Studies in Mathematics}.
\newblock American Mathematical Society, Providence, RI, 2008.

\bibitem{Carey84}
A.~L. Carey and W.~Moran.
\newblock Characters of nilpotent groups.
\newblock {\em Math. Proc. Cambridge Philos. Soc.}, 96(1):123--137, 1984.

\bibitem{Carrion13}
Jos{\'e}~R. Carri{\'o}n, Marius Dadarlat, and Caleb Eckhardt.
\newblock On groups with quasidiagonal {$C^*$}-algebras.
\newblock {\em J. Funct. Anal.}, 265(1):135--152, 2013.

\bibitem{Davidson96}
Kenneth~R. Davidson.
\newblock {\em {C}*-algebras by example}.
\newblock Number~6 in Fields Institute Monographs. American Mathematical
  Society, Providence, R.I., 1996.

\bibitem{Dixmier77}
Jacques Dixmier.
\newblock {\em {$C\sp*$}-algebras}.
\newblock North-Holland Publishing Co., Amsterdam, 1977.
\newblock Translated from the French by Francis Jellett, North-Holland
  Mathematical Library, Vol. 15.

\bibitem{Eckhardt13b}
Caleb Eckhardt.
\newblock A note on strongly quasidiagonal groups.
\newblock arXiv:1309.2205 [math.OA], submitted, 2013.

\bibitem{Hadwin87}
Don Hadwin.
\newblock Strongly quasidiagonal {$C^*$}-algebras.
\newblock {\em J. Operator Theory}, 18(1):3--18, 1987.
\newblock With an appendix by Jonathan Rosenberg.

\bibitem{Hall88}
Philip Hall.
\newblock Nilpotent groups.
\newblock In {\em The collected works of {P}hilip {H}all}, Oxford Science
  Publications. The Clarendon Press Oxford University Press, New York, 1988.

\bibitem{Hartley82}
B.~Hartley.
\newblock Powers of the augmentation ideal.
\newblock In {\em {P}roceedings of the {I}nternational {M}athematical
  {C}onference, {S}ingapore 1981 ({S}ingapore, 1981)}, volume~74 of {\em
  North-Holland Math. Stud.}, pages 49--57. North-Holland, Amsterdam, 1982.

\bibitem{Howe77}
Roger~E. Howe.
\newblock On representations of discrete, finitely generated, torsion-free,
  nilpotent groups.
\newblock {\em Pacific J. Math.}, 73(2):281--305, 1977.

\bibitem{Jennings55}
S.~A. Jennings.
\newblock The group ring of a class of infinite nilpotent groups.
\newblock {\em Canad. J. Math.}, 7:169--187, 1955.

\bibitem{Kaniuth06}
Eberhard Kaniuth.
\newblock Induced characters, {M}ackey analysis and primitive ideal spaces of
  nilpotent discrete groups.
\newblock {\em J. Funct. Anal.}, 240(2):349--372, 2006.

\bibitem{Leveque96}
William~J. LeVeque.
\newblock {\em Fundamentals of number theory}.
\newblock Dover Publications Inc., Mineola, NY, 1996.
\newblock Reprint of the 1977 original.

\bibitem{Malcev49}
A.~I. Mal'cev.
\newblock Nilpotent torsion-free groups.
\newblock {\em Izvestiya Akad. Nauk. SSSR. Ser. Mat.}, 13:201--212, 1949.

\bibitem{Moore76}
Calvin~C. Moore and Jonathan Rosenberg.
\newblock Groups with {$T_{1}$} primitive ideal spaces.
\newblock {\em J. Functional Analysis}, 22(3):204--224, 1976.

\bibitem{Murphy90}
Gerard~J. Murphy.
\newblock {\em {$C^*$}-algebras and operator theory}.
\newblock Academic Press Inc., Boston, MA, 1990.

\bibitem{Murray36}
F.~J. Murray and J.~Von~Neumann.
\newblock On rings of operators.
\newblock {\em Ann. of Math. (2)}, 37(1):116--229, 1936.

\bibitem{Orfanos11}
Stefanos Orfanos.
\newblock Quasidiagonality of crossed products.
\newblock {\em J. Operator Theory}, 66(1):209--216, 2011.

\bibitem{Pedersen79}
Gert~K. Pedersen.
\newblock {\em ${C}^*$-algebras and their automorphism groups.}
\newblock Number~14 in London Mathematical Society Monographs. Academic Press
  Inc., London-New York, 1979.

\bibitem{Poguntke81}
Detlev Poguntke.
\newblock Discrete nilpotent groups have a {$T_{1}$} primitive ideal space.
\newblock {\em Studia Math.}, 71(3):271--275, 1981/82.

\bibitem{Segal83}
Daniel Segal.
\newblock {\em Polycyclic groups}, volume~82 of {\em Cambridge Tracts in
  Mathematics}.
\newblock Cambridge University Press, Cambridge, 1983.

\bibitem{Swan67}
Richard~G. Swan.
\newblock Representations of polycyclic groups.
\newblock {\em Proc. Amer. Math. Soc.}, 18:573--574, 1967.

\bibitem{Thoma64}
Elmar Thoma.
\newblock \"{U}ber unit\"are {D}arstellungen abz\"ahlbarer, diskreter
  {G}ruppen.
\newblock {\em Math. Ann.}, 153:111--138, 1964.

\bibitem{Voiculescu76}
Dan Voiculescu.
\newblock A non-commutative {W}eyl-von {N}eumann theorem.
\newblock {\em Rev. Roumaine Math. Pures Appl.}, 21(1):97--113, 1976.

\end{thebibliography}
\end{document}